\journal{---}
\newtheorem{thm}{Theorem}[section]
\newtheorem{lem}[thm]{Lemma}
\newtheorem{prop}[thm]{Proposition}
\newdefinition{defn}[thm]{Definition}
\newdefinition{exmp}[thm]{Example}
\newdefinition{prob}{Problem}
\newdefinition{rem}{Remark}
\newdefinition{ques}{Question}
\newcommand{\diam}{{\rm{diam}}}
\newcommand{\ceil}[1]{\ensuremath{\left\lceil #1\right\rceil} }
\newcommand{\size}[1]{\left \vert #1 \right \vert}
\DeclareMathOperator{\acop}{c_a}
\DeclareMathOperator{\cop}{c}
\def\cart{\, \Box \,}
\begin{document}

\begin{frontmatter}

\title{Fully Active Cops and Robbers}

\author[McGill]{Ilya Gromovikov}
\ead{ilya.gromovikov@mail.mcgill.ca} 
\author[URI]{William B. Kinnersley}
\ead{billk@uri.edu}
\author[DC,UM]{Ben Seamone}
\ead{bseamone@dawsoncollege.qc.ca}
   \address[McGill]{Department of History and Classical Studies, McGill University, Montreal, QC}
\address[URI]{
   Department of Mathematics, 
   University of Rhode Island, Kingston, RI}
\address[DC]{
   Mathematics Department, Dawson College, Montreal, QC}
\address[UM]{
   D\'epartement d'informatique et de recherche operationnelle, Universit\'e de Montreal, Montreal, QC}

\begin{abstract}
We study a variation of the classical pursuit-evasion game of Cops and Robbers in which agents are required to move to an adjacent vertex on every turn.  We explore how the minimum number of cops needed to catch the robber can change when this condition is added to the rules of the game.  We study this ``Fully Active Cops and Robbers'' game for a number of classes of graphs and present some open problems for future research.
\end{abstract}

\begin{keyword}Graph searching; cops and robbers.
\end{keyword}
\end{frontmatter}

\section{Introduction}

The game of Cops and Robbers played on graphs was introduced independently by Quillot \cite{Q83} and Nowakowski and Winkler \cite{NW83}.  The game is played between a set of pursuers (cops) and an evader (robber) who move from vertex to adjacent vertex in a graph.  The cops win if at least one cop is able to occupy the same vertex as the robber; the robber wins if he can avoid capture indefinitely.  In the original version of the game, the game begins with the cops choosing their starting vertices, followed by the robber choosing his; multiple cops may occupy the same vertex simultaneously.  The cops move first, with each cop either moving to a vertex adjacent to her current position or staying on her current vertex.  The robber then moves similarly.  Players continue to alternate moves in this way.

Many variations on Cops and Robbers have been studied, the most common of which focus on altering the rules by which the players move.  For example, Aigner and Fromme \cite{AF84} and Neufeld and Nowakowski \cite{NN98} considered the so-called ``active'' version of the game, in which the robber must move on every robber turn -- that is, he cannot remain on his current vertex -- and at least one cop must move on every cop turn.  Recently, Offner and Ojakian \cite{OO14} introduced a wide class of Cops and Robbers variants, wherein one specifies how many cops must move on every cop turn, how many must remain in place, and how many may do either.  They focused in particular in the case where only one cop may move on each turn; this variant was termed \textit{Lazy Cops and Robbers} by Bal et al. \cite{BBKP15} and studied afterwards by several authors (see for example \cite{BBKP16,GY17,STW17}).

Inspired by the ``active'' game of Aigner and Fromme, we consider the variant of Cops and Robbers in which no player may ever remain on a vertex -- that is, every player must move on each turn.  We call this variant \textit{Fully Active Cops and Robbers}, and we refer to the original game as \textit{Passive Cops and Robbers}.  The minimum number of cops required for the cops to have a winning strategy in a graph $G$ is called the \textit{cop number} of $G$.  We use $\cop(G)$ to denote the cop number of $G$ in the passive setting and $\acop(G)$ to denote the cop number of $G$ in the fully active setting.

In this paper, we primarily focus on establishing values (or bounds on the value) of $\acop(G)$.  In Section \ref{sec:simple}, we give bounds on $\acop$ over several elementary classes of graphs; our main result of this section, Theorem \ref{thm:outerplanar}, states that $\acop(G) \le 2$ whenever $G$ is outerplanar.  In Section \ref{sec:comparison}, we investigate when and how much $\acop(G)$ and $\cop(G)$ can differ.  Theorem \ref{bounds} states that always $\cop(G)-1 \le \acop(G) \le 2\cop(G)$, while Theorem \ref{thm:bipartite_blowup} gives a class of graphs on which this upper bound on $\acop(G)$ is tight.  In Section \ref{sec:products}, we study the fully active game played on Cartesian products of graphs.  For general graphs, Theorem \ref{thm:prod_general} states that $\acop(G \cart H) \le \acop(G) \times \acop(H)$ provided that $G$ and $H$ are not both bipartite.  Theorem \ref{thm:cartesian_trees} gives the precise value of the fully active cop number of the Cartesian product of nontrivial trees, while Theorems \ref{thm:prod_cycles_nonbipartite} and \ref{thm:prod_cycles_bipartite} give bounds on the fully active cop number of a product of cycles.  We conclude with several tantalizing open questions.

\section{Some simple graph classes}\label{sec:simple}

Many of the strategies known for catching a robber in the passive game seem to fail in the fully active game.  For example, Aigner and Fromme showed \cite{AF84} that if $P$ is a shortest $uv$-path in $G$ (where $u$ and $v$ are any two distinct vertices in $G$), then one cop can always move along $P$ so that the robber may never occupy a vertex of $P$; this was the foundation of their proof that $\cop(G) \le 3$ whenever $G$ is planar.  However, this strategy relies on the ability of the cop to stay put if needed, and as such, it cannot be applied in the fully active game.  Thus, when studying the fully active game, we are forced to return to basics.

We will begin with some elementary results on Fully Active Cops and Robbers.  In the case of some simple, classic classes of graphs, it is easy to see that the cop number remains unchanged in the fully active setting.

\begin{prop}

    \begin{enumerate}
        \item If $T$ is a tree, then $\acop(T) = \cop(T) = 1$.
        \item If $n$ is a positive integer, then $\acop(C_n) = \cop(C_n) = 2$.
        \item If $n$ is a positive integer, then $\acop(K_n) = \cop(K_n) = 1$.
        \item If $m,n$ are positive integers, then $\acop(K_{m,n}) = \cop(K_{m,n}) = 2$.
    \end{enumerate}
\end{prop}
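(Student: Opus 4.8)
The passive cop numbers appearing here --- $\cop(T)=1$, $\cop(C_n)=2$ for $n\ge 4$, $\cop(K_n)=1$, and $\cop(K_{m,n})=2$ for $m,n\ge 2$ --- are all classical, so the work is entirely in the fully active statements. Part (3) is immediate: a single cop placed on any vertex of $K_n$ faces a robber on some other, necessarily adjacent, vertex, so on her first move she steps onto him; together with the trivial bound $\acop\ge 1$ this gives $\acop(K_n)=1$. For parts (1), (2), and (4) the plan is to give a cop strategy for the upper bound and, where the claimed value is $2$, a robber strategy for the lower bound. (I take $n\ge 4$ in part (2) and $m,n\ge 2$ in part (4); the excluded small cases are either covered by part (1) or part (3), or correspond to no simple graph.)

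For part (1) the cop plays the natural chase: start on any vertex and, on each turn, step to the unique neighbour lying on the path toward the robber's current vertex. This is always a legal move, since the robber is not on the cop's vertex, and it respects the must-move rule. The key is a monotone quantity: let $S$ be the component of $T$ minus the cop's vertex that contains the robber. I would show $|S|$ strictly decreases each round --- when the cop advances to the neighbour $u$ on the path to the robber, the robber lies in a component of $T-u$ properly contained in the old $S$ (in a tree he cannot slip past $u$ to the far side of the cop's previous vertex), and his forced reply keeps him there. Hence $|S|$ eventually equals $1$, meaning the robber sits on a leaf whose only neighbour is the cop, and the cop captures on her next move. The one point requiring care is checking that this quantity is unaffected by the forced-move rule, which holds precisely because ``advance toward the robber'' is always available.

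For part (2), the lower bound is a mirroring argument: against a lone active cop on $C_n$, the robber starts at a vertex at distance exactly $2$ from the cop (possible since $n\ge 4$) and thereafter always copies the cop's direction of travel around the cycle, keeping the cop--robber distance equal to $2$ forever; one checks that the cop's move never lands on the robber and the robber is never forced onto the cop, so $\acop(C_n)\ge 2$. For the upper bound, place two cops on adjacent vertices and send one clockwise and the other counterclockwise, so that they enter from both ends the arc containing the robber. The robber is confined to the open arc strictly between the two cops --- stepping onto a cop ends the game and he cannot jump --- and this arc loses two vertices per round as the cops advance, so within roughly $n/2$ rounds it is too short to hold him and he is caught. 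The fiddly part is the round-by-round accounting: confirming the robber cannot slip past a cop and that the free arc shrinks by two each round (rather than the bound being an overcount), with separate attention to the endgame when one or two vertices of the arc remain.

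For part (4), the upper bound is immediate: place one cop in each part of $K_{m,n}$; whichever part the robber occupies, the cop in the opposite part is adjacent to him and steps onto him on the first move, while the other cop moves arbitrarily. For the lower bound, against one cop the robber begins on the cop's part, at a different vertex, and maintains the invariant ``same part as the cop, different vertex'' --- sustainable because each part has at least two vertices: at the start of a cop turn the two lie on the same part, hence are non-adjacent, so the cop cannot be on the robber; and when the robber moves he is never forced onto the cop, as the cop's part contains another vertex to flee to. So the robber evades indefinitely and $\acop(K_{m,n})=2$. Of these arguments I expect the cycle upper bound to be the most delicate to write cleanly, while the subtler conceptual point is making the tree argument's compatibility with the forced-move rule explicit.
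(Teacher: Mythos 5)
Your proof is correct. The paper states this proposition without proof (it is introduced with ``it is easy to see''), and your arguments are exactly the standard ones one would supply: the shrinking-component chase on trees, the two-cop squeeze and distance-two mirroring on cycles, and the part-parity tracking on $K_{m,n}$; each of these interacts correctly with the forced-move rule, which is the only point where the fully active game could have caused trouble. You are also right to flag that the statement implicitly assumes $n\ge 4$ in part (2) and $m,n\ge 2$ in part (4), since $C_3=K_3$ and $K_{1,n}$ is a tree, both of which have (fully active) cop number $1$.
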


A graph is called \textit{cop-win} if one cop has a winning strategy (and \textit{robber-win} otherwise).  It is not hard to see that cop-win graphs in the passive setting are also cop-win in the active game.

\begin{prop}\label{copwin}
Let $G$ be a graph.  If $\cop(G) = 1$, then $\acop(G) = 1$.
\end{prop}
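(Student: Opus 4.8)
\emph{Proof proposal.} I would prove, by induction on $|V(G)|$, the slightly stronger statement that \emph{in any graph $G$ with $\cop(G)=1$, a single cop who must move on every turn can capture a robber who is allowed either to move or to stay put}; the Proposition is then immediate, since a fully active robber is a special case of such a robber. This strengthening is what makes the induction close. The main tool is the characterization of Nowakowski and Winkler \cite{NW83}: $\cop(G)=1$ if and only if $G$ is \emph{dismantlable}, so when $|V(G)|\ge 2$ there is a \emph{corner}, i.e.\ a vertex $v$ with $N[v]\subseteq N[u]$ for some $u\ne v$, and $G-v$ is again cop-win. I would fix such $v$ and $u$; note $v\sim u$ (as $v\in N[v]\subseteq N[u]$ and $v\ne u$), and $G':=G-v$ is connected (every path through $v$ reroutes through $u$), cop-win, and smaller, so the induction hypothesis applies to $G'$.

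For the inductive step, let $\mathcal S'$ be a winning strategy realizing the induction hypothesis on $G'$, and let the cop on $G$ play $\mathcal S'$ against the \emph{shadow} $f(r)$ of the robber, where $f$ is the identity on $V(G')$ and $f(v)=u$. Two checks are needed. First, every vertex and edge used by $\mathcal S'$ lies in $G'$, so the cop can realize $\mathcal S'$'s moves in $G$, and her actual position always equals her position in the simulated $G'$-game. Second --- using $N[v]\subseteq N[u]$ and $v\sim u$ --- applying $f$ to any robber move (or stay) yields a legal move or stay of the shadow in $G'$: a move between two vertices of $G'$ is reproduced verbatim, while a move with an endpoint at $v$ becomes, after replacing $v$ by $u$, either a legal move of the shadow along an edge of $G'$ or a stay at $u$. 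In particular the shadow may stall --- precisely when the robber's move stays inside $\{u,v\}$ --- which is exactly why the induction hypothesis must allow a stalling robber. Hence $\mathcal S'$ is legitimately applicable, and by hypothesis the cop eventually occupies the shadow's vertex: at some point $c=f(r)$, where $c,r$ denote the current cop and robber vertices.

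To finish, I would convert ``catching the shadow'' into ``catching the robber''. If $r\ne v$ then $c=f(r)=r$ and the robber is already caught. If $r=v$ then $c=u$ with $u\sim v$, and since $N[v]\subseteq N[u]$ every vertex the robber can reach in one step --- including $v$ itself --- lies in $N[c]$, so the cop captures within one more move, regardless of whose turn it is when $c=f(r)$ first holds. Together with the trivial base case $|V(G)|=1$ (where the robber is caught at placement and no move is ever required), this completes the induction. The one genuine obstacle is the stalling shadow: the shadow of an always-moving robber need not itself move, so a naive induction phrased only for a fully active robber fails, and one is forced to prove the stronger claim. Beyond that, the only care needed is the turn-parity bookkeeping at the moment the shadow is reached, which the uniform one-move endgame above absorbs.
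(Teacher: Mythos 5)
Your proof is correct, but it takes a genuinely different route from the paper's. The paper argues on the robber's side via a short simulation: if the robber could win the fully active game with some strategy, he could win the passive game against one cop by playing his fully-active response whenever the passive cop moves and staying put whenever she stays, contradicting $\cop(G)=1$; no structure theory is invoked and the whole argument is four sentences. You instead work on the cop's side, inducting along a dismantling order of the cop-win graph and retracting the robber onto the shadow that collapses a corner $v$ onto a dominating vertex $u$. Your approach is longer and leans on the Nowakowski--Winkler characterization, but it buys something the paper's argument does not: you prove the strictly stronger statement that on a cop-win graph a cop who \emph{must} move can catch a robber who \emph{may} stay (the hardest of the four move-rule combinations for the cop), whereas the paper's simulation only treats the symmetric fully active game. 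Your identification of the stalling shadow --- the robber oscillating inside $\{u,v\}$ while his image in $G-v$ sits still --- as the reason the induction must be phrased for a may-stay robber is exactly the right technical point, and the one-move endgame from $N[v]\subseteq N[u]$ correctly absorbs the turn-parity issue at the moment the shadow is captured. The paper's proof, by contrast, is essentially free of bookkeeping but yields only the stated proposition.
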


\begin{proof}
Suppose, to the contrary, that $G$ is robber-win in the fully active setting.  The robber could then play as follows in the passive setting.  When the cop moves, the robber plays his corresponding move from his winning strategy in the fully active setting.  When the cop remains in place, so does the robber.  This yields a winning strategy for the robber in the passive setting, a contradiction.
\end{proof}

We now provide a simple construction showing that the converse of Proposition \ref{copwin} does not hold and so, in general, $\acop(G)$ need not equal $\cop(G)$.

\begin{prop}\label{vAB}
Let $G$ be a graph with vertex partition $\{v\} \cup A \cup B$ and the following edges:
    \begin{itemize}
        \item $v$ is adjacent to all vertices in $A$,   % I'm not familiar with this terminology -- is it standard?  - Bill
        \item each vertex in $A$ is adjacent to all vertices in $B$, and
        \item $B$ is a clique.
    \end{itemize}
If $|A| \geq 2$ and $\size{B} \ge 2$ then $\acop(G) = 1$ and $\cop(G) = 2$.
\end{prop}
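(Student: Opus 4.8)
The plan is to treat the two equalities separately: an explicit one-cop strategy for $\acop(G)=1$, and an easy domination bound together with the dismantling characterization of cop-win graphs for $\cop(G)=2$.

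For the fully active game I would place the single cop on an arbitrary vertex $b_1\in B$. Since every vertex of $A$ is adjacent to $b_1$ and $B$ is a clique, $N[b_1]=A\cup B$, so the unique vertex on which the robber can start without being captured on the cop's first move is $v$. The cop then slides to a second vertex $b_2\in B$ — this is the one place where $\size{B}\ge 2$ is needed — after which the robber, forced to move off $v$, must step onto some $a\in A$, the only neighbours of $v$. But $a$ is adjacent to $b_2$, so the cop captures on her next move. Hence $\acop(G)\le 1$, and trivially $\acop(G)\ge 1$, so $\acop(G)=1$.

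For the passive game, the upper bound is immediate: for any $a\in A$ the set $\{v,a\}$ dominates $G$ (as $v$ dominates $\{v\}\cup A$ and $N[a]=\{v,a\}\cup B$), and placing a cop on each vertex of a dominating set wins the passive game, so $\cop(G)\le 2$. For the lower bound I would use that any two vertices $b,b'\in B$ are true twins, with $N[b]=N[b']=A\cup B$; hence each is a corner, and since deleting a corner preserves whether a graph is cop-win, I can delete the vertices of $B$ one at a time (always leaving a twin behind, which again uses $\size{B}\ge 2$) until only one vertex $b^{*}\in B$ remains. The resulting graph has vertex set $\{v\}\cup A\cup\{b^{*}\}$ with $v$ and $b^{*}$ each joined to all of $A$ and to nothing else — that is, $K_{2,\size{A}}$ — which for $\size{A}\ge 2$ is not cop-win (it is $C_4$ when $\size{A}=2$, and $\cop(K_{2,n})=2$ for $n\ge 2$ in general). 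Therefore $G$ is not cop-win, and with the upper bound this gives $\cop(G)=2$.

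The proof is short and I do not anticipate a genuine obstacle. The two points that need care are the exact turn order and capture convention in the fully active game (so that the forced-move argument really does pin the robber down), and the fact that one must reduce via deletion of corners (the true twins in $B$) rather than by collapsing the clique $B$ to a single vertex, since the latter is not a graph homomorphism and so does not give a retract.
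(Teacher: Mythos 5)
Your argument for $\acop(G)=1$ is exactly the paper's: start the cop on a vertex of $B$, whose closed neighbourhood is $A\cup B$, forcing the robber to start on $v$; shuffle to a second vertex of $B$; and capture when the forced move pushes the robber into $A$. For $\cop(G)=2$ you take a genuinely different route on the lower bound. The paper exhibits an explicit evasion strategy for the robber against one cop (cop at $v$ $\Rightarrow$ robber in $B$; cop in $A$ $\Rightarrow$ robber at another vertex of $A$, using $\size{A}\ge 2$; cop in $B$ $\Rightarrow$ robber at $v$), whereas you invoke the corner-deletion characterization of cop-win graphs: the vertices of $B$ are true twins with closed neighbourhood $A\cup B$, so they may be deleted one at a time while preserving cop-win status, leaving $K_{2,\size{A}}$, which is not cop-win for $\size{A}\ge 2$. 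Both are correct; your reduction is slick and reuses a known classification (and your caveat about deleting corners rather than collapsing the clique is well taken in the irreflexive setting), while the paper's explicit strategy is self-contained and makes visible exactly where $\size{A}\ge 2$ enters. Your dominating-set observation for $\cop(G)\le 2$ fills in a step the paper leaves as ``easy to check.''
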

% Does this need some explanation?  And should it be |B| \ge 2 (also)?  Here is a proof if we want to include it.
\begin{proof}
In the passive game, the robber can evade one cop as follows.  If the cop begins the game on $v$, then the robber begins on any vertex in $B$; if the cop begins on some vertex in $A$, then the robber begins on any other vertex in $A$; if the cop begins on some vertex in $B$, then the robber begins on $v$.  Likewise, when the cop moves to $v$, the robber moves to some vertex in $B$; when the cop moves to a vertex in $A$, the robber moves to a different vertex in $A$; when the cop moves to a vertex in $B$, the robber moves to $v$.  Hence $\cop(G) \ge 2$, and it is easy to check that $\cop(G) \le 2$.  

In the fully active setting, a single cop can win as follows.  The cop begins on some vertex in $B$.  If the robber begins in $A \cup B$, then the cop can capture him immediately.  If instead the robber begins on $v$, then the cop moves to a different vertex of $B$.  The robber must now move to $A$, after which the cop can capture him.
\end{proof}

As mentioned earlier, a single cop cannot, in general, guard a shortest path in the fully active setting.  This strategy was crucial to Aigner and Fromme's proof \cite{AF84} that $\cop(G) \leq 3$ for any planar graph $G$, and as such, their proof cannot be adapted to the fully active setting.  It appears that determining a tight upper bound on $\acop$ for planar graphs requires some new ideas and may be difficult.  However, it is much easier to analyze the fully active game on outerplanar graph.  Clarke \cite{C02} showed that two cops can capture a robber on any outerplanar graph in the passive game; we show that the same is true in the fully active game.  The proof presented below is an adaptation of the proof from \cite{BN} of Clarke's result.

\begin{thm}\label{thm:outerplanar}
If $G$ is an outerplanar graph, then $\acop(G) \leq 2$.
\end{thm}

\begin{proof}
Let $G$ be an outerplanar graph.  Let us first assume that $G$ has no cut vertices.  If $G$ is a cycle, then clearly $\acop(G) \le 2$.  If not, then by definition $G$ may be embedded in the plane so that the outer face is bounded by a Hamiltonian cycle $C$ and all chords lie in the interior of $C$ and are non-crossing.  Let $C = v_1v_2\cdots v_nv_1$ and let $C_1$ and $C_2$ denote the two cops.  We say that a cop \textit{controls} a vertex $v$ if that cop's position is in the closed neighbourhood of $v$.  For two vertices $v_k$ and $v_{\ell}$, we define $v_kCv_{\ell}$ to be the path $v_kv_{k+1}\cdots v_{\ell-1}v_{\ell}$ (subscripts taken modulo $n$).

Throughout the game, cop $C_1$ will control some vertex $v_k$ and $C_2$ will control some vertex $v_{\ell}$.  The robber's position must lie on either $v_kCv_{\ell}$ or $v_{\ell}Cv_k$; call the interior of this path the \textit{robber territory} and the interior of the other path the \textit{cop territory}, and call $v_k$ and $v_{\ell}$ the \textit{endpoints} of the cop territory.  We show how the cops can move so that (a) after some finite number of moves the size of the cop territory has increased and (b) at no point does any edge join the robber's position to the cop territory.% (since no chord of $G$ can cross the chord occupied by the cops). BEN: omitted this since the cops do not control the ends of a chord in the final 2-connected case.
%Suppose that $C_1$ controls $v_k$ and $C_2$ controls  $v_{\ell}$.   

The cops begin by placing themselves at the ends of some chord and choosing $v_k$ and $v_{\ell}$ to be the vertices occupied by cops $C_1$ and $C_2$, respectively.  This ensures that no matter where the robber starts, no edge joins the robber's position to the cop territory.  %Now suppose that, at an aribtrary stage of the game, $C_1$ controls some vertex $v_k$ and $C_2$ controls $v_{\ell}$.% (note that $C_2$ must occupy one end of an edge $e$ which is incident to $v_{\ell}$)

Now consider an arbitrary point during the game just before the cops' turn, and suppose that cop $C_1$ controls $v_k$ while $C_2$ controls $v_{\ell}$; note that $C_2$ must occupy an endpoint of some edge, say $e$, incident to $v_{\ell}$.  Furthermore, suppose without loss of generality that the robber is on $v_kCv_{\ell}$ and suppose that no edge joins the robber's current position to the cop territory.  If some chord joins $v_k$ to some vertex in the robber territory, then let $v_r$ be the neighbour of $v_k$ in the robber territory which is closest along $C$ to $v_{\ell}$ (but not equal to $v_{\ell}$).  We may assume that $C_1$ occupies $v_k$, because if not she may move to $v_k$ while $C_2$ moves back and forth along $e$.  %, and any subsequent robber move keeps him in the robber territory.
If the robber's position is adjacent to $v_k$ then the cops clearly win, so suppose otherwise.  If the robber's position is on $v_rCv_{\ell}$, then $C_1$ moves to $v_r$ and $C_2$ moves along $e$.  The cops now control $v_r$ and $v_{\ell}$.  Note that by choice of $v_r$, no edge can join the robber's current position to the interior of $v_{\ell}Cv_r$: by assumption no edge joined the robber's current position to the interior of $v_{\ell}Cv_k$, the robber's position was not adjacent to $v_k$, and any edge joining the robber's position to the interior of $v_kCv_r$ would have to cross the chord $v_kv_r$.  The cops now set $v_k = v_r$, thereby enlarging the cop territory to $v_rCv_{\ell}$.  If the robber's position is in $v_kCv_r$, then $C_2$ moves to $v_r$ while $C_1$ moves along $v_kv_r$ as many times as necessary.  The cops then set $v_{\ell} = v_r$, enlarging the cop territory to $v_kCv_r$.  (Note that no edge can join the robber's current position to the cop territory, since any such edge would cross chord $v_kv_r$.)  A symmetric argument shows that the cops can enlarge the cop territory if $v_{\ell}$ has a chord to the robber territory but $v_k$ does not.  Finally, suppose that neither $v_k$ nor $v_{\ell}$ has a chord to the robber territory.  In this case, every path from the robber to the cop territory passes through either  edge $v_kv_{k+1}$ or edge $v_{\ell}v_{\ell-1}$.  In one step, $C_1$ can move to control $v_{k+1}$ and $C_2$ can move to control $v_{\ell-1}$.  The cops can then set $v_k = v_{k+1}$ and $v_{\ell} = v_{\ell-1}$, thereby increasing the cop territory to $v_{\ell-1}Cv_{k+1}$.

Now, suppose that $G$ has cut vertices and that the two cops occupy vertices in the same $2$-connected block $B$ of $G$.  If the robber's position is also in $B$, then the cops play as above.  If the robber's position is not in $B$, then there is some cut vertex $v$ which separates the cops' positions from the robber's position.  In this case, the cops play as if the robber occupies $v$.  In this way, the cops will either eventually capture the robber in $B$ or will both control $v$.  In the latter case, the cops may now move to a new $2$-connected block $B'$ where $V(B') \cap V(B) = \{v\}$ and repeat the strategy.  Since $v$ will always belong to the cops' territory in $B'$, the robber can never move back to $B$.  Eventually, the robber will be forced to an end-block of $G$ and will be caught.% by the strategy in the previous paragraph.
\end{proof}

\section{Passive versus fully active cop numbers}\label{sec:comparison}

In this section, we explore the relationship between $\acop(G)$ and $\cop(G)$.  We begin with elementary lower and upper bounds on $\acop(G)$ in terms of $\cop(G)$.

\begin{thm}\label{bounds}
If $G$ is a graph with $\cop(G) = k$, then $k-1 \leq \acop(G) \leq 2k$.
\end{thm}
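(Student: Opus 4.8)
The plan is to establish the two inequalities separately, each via a simulation that converts a winning strategy in one version of the game into a winning strategy in the other.

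For the upper bound $\acop(G) \le 2k$, I would take an optimal passive strategy using cops $P_1,\dots,P_k$ and pair up $2k$ fully active cops, assigning pair $i$ to shadow $P_i$. The invariant to maintain is that, at the start of each fully active cop turn, one cop of pair $i$ (the ``anchor'') occupies exactly the vertex of $P_i$, while the other (the ``escort'') occupies a neighbour of that vertex. If $P_i$ traverses an edge, the anchor follows it and the escort steps onto the anchor's old vertex; if $P_i$ stays put, the anchor and escort swap along the edge joining them, so the former escort becomes the new anchor. A short check shows both of these are legal fully active moves and that the invariant is restored in each case. The fully active robber's moves are replayed verbatim as moves in the simulated passive game (any robber move is a legal passive move), so whenever the passive strategy captures the robber, the corresponding anchor is on the robber too. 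The only points needing care are the base case (place each anchor on $P_i$'s starting vertex and each escort on a neighbour, which exists since we may assume $G$ connected with at least two vertices) and checking that a capture occurring on a cop turn or on a robber turn in the passive game corresponds to a capture at the analogous moment in the fully active game.

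For the lower bound $\acop(G) \ge k-1$, equivalently $\cop(G) \le \acop(G)+1$, I would use $\acop(G)+1$ passive cops: $\acop(G)$ ``worker'' cops running an optimal fully active strategy against a phantom robber, together with one extra ``tracker'' cop $T$. The workers advance their fully active strategy only on turns immediately following a genuine robber move, and stay put on turns following a robber ``stay''; because of this pausing convention the phantom robber coincides with the real robber at all times, so whenever the workers are not paused they are faithfully playing the fully active game against the real robber. Meanwhile $T$ chases the robber: its distance to the robber never increases and strictly decreases on every turn the robber stays, so after finitely many stays $T$ becomes adjacent to the robber and can maintain adjacency thereafter, at which point the robber can never stay again without being captured by $T$ on the following cop turn. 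The endgame then splits into two cases. If the robber stays infinitely often, $T$ catches it (or forces it to stop staying) after boundedly many stays, a contradiction; otherwise the robber eventually stops staying, and from that moment the workers execute an uninterrupted tail of their fully active winning strategy, starting from a configuration that is a legitimately reachable mid-game state of an honest fully active game, and so they capture the robber.

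The main obstacle is the lower bound, and the two points I would be most careful about are: first, making the pausing simulation respect the alternating turn order (the cops must choose their move before seeing whether the robber will stay, so the workers necessarily react to the robber's previous move), and second, verifying that the workers' position when the robber stops stalling really is a state reachable partway through some honest fully active play, so that the remainder of the fully active winning strategy still applies there. It is also worth recording why the tracker cop cannot be dispensed with: without it the robber can stall forever on a ``safe'' vertex, exactly the phenomenon exhibited by Proposition~\ref{vAB}, which is what allows $\acop(G)$ to be strictly smaller than $\cop(G)$ and hence forces the additive loss in the lower bound.
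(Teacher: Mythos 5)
Your proposal is correct and follows essentially the same route as the paper's proof: the anchor/escort pairing for the upper bound is exactly the paper's $C_i$/$D_i$ swapping argument, and the worker-plus-tracker simulation for the lower bound is exactly the paper's strategy with $t$ cops mimicking the fully active strategy plus one extra cop $C^*$ whose approach forces the robber to stop stalling. The additional care you take with the turn-order bookkeeping and the reachability of the simulated mid-game state is sound but does not change the argument.
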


\begin{proof}
We first prove the upper bound.  Let $\{C_1, \ldots, C_k, D_1, \ldots, D_k\}$ be a set of cops.  We place $\{C_1, \ldots, C_k\}$ on $V(G)$ according to a winning strategy in the passive setting and place each $D_i$ on some vertex adjacent to $C_i$.  The cops then play a modification of their winning strategy from the passive game.  If the strategy for the passive game requires $C_i$ to move to a new vertex, then she does so and the corresponding cop $D_i$ moves to the vertex formerly occupied by $C_i$.  If $C_i$ stays on her vertex in the passive game strategy, then $C_i$ and $D_i$ switch places and labels.  This ensures that at all times, all vertices occupied by cops in the passive game are also occupied by cops in the fully active game; since the cops eventually capture the robber in the passive game, they do so in the fully active game as well.

To prove the lower bound, we suppose that $\acop(G) = t$ and show how $t+1$ cops can win the passive game.  Let $\{C_1, \ldots, C_t, C^*\}$ be a set of cops.  We place $\{C_1, \ldots, C_t\}$ on $V(G)$ according to a winning strategy in the fully active setting and place $C^*$ arbitrarily.  The cops then play a modification of their winning strategy in the fully active game.  If the robber moves, then each $C_i$ moves according to the winning strategy and $C^*$ moves to decrease her distance from the robber.  If the robber remains in place, then so does each $C_i$, while $C^*$ again moves to decrease her distance from the robber.  Eventually, the game will be in a state where either the robber has been caught by some $C_i$ or $C^*$ occupies a vertex adjacent to the robber's position.  In the latter case, the robber can no longer remain in place, so the game proceeds as if it were being played in the fully active setting.  Thus, $\cop(G) \leq \acop(G) + 1$.
\end{proof}

We have seen that there exist graphs for which $\acop(G) = \cop(G)-1$ (Theorem \ref{vAB}), so the lower bound in Theorem \ref{bounds} is tight.  We next show that the upper bound is tight as well by producing, for each positive integer $k$, a class of  graphs $G$ having $\cop(G) = k$ and $\acop(G) = 2k$.

For a graph $G$ and positive integer $t$, the {\em $t$-blowup} of $G$ is a new graph obtained by replacing each vertex $v$ in $G$ with an independent set $S_v$ of size $t$ and replacing each edge $uv$ in $G$ with a complete bipartite graph having partite sets $S_u$ and $S_v$.  We refer to the vertices in $S_v$ as {\em copies} of $v$, and we call $v$ the {\em shadow} of each vertex in $S_v$.  We denote the $t$-blowup of $G$ by $G^{(t)}$.

We will need the following lemma, which is a special case of a result established by Schr\"{o}der (\cite{Sch14}, Theorem 2.7).  Note that the requirement that $\cop(G) \ge 2$ cannot be lifted: for $t \ge 2$, a robber may evade a single cop by always occupying a vertex that is distinct from the cop's vertex, yet has the same shadow.  

% Before giving our construction of graphs realizing the upper bound in Theorem \ref{bounds}, we establish a result on the passive game that we will use later.

\begin{lem}\label{lem:passive_blowup}
Let $G$ be a connected graph.  If $\cop(G) \ge 2$, then $\cop(G^{(t)}) = \cop(G)$ for all positive integers $t$.
\end{lem}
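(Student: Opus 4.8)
The plan is to establish the two inequalities $\cop(G^{(t)}) \le \cop(G)$ and $\cop(G^{(t)}) \ge \cop(G)$ separately, using the natural projection $\pi \colon V(G^{(t)}) \to V(G)$ that sends each vertex to its shadow. The key observation underpinning both directions is that $\pi$ is a graph homomorphism whose fibers are independent sets, and that $uv \in E(G)$ if and only if every copy of $u$ is adjacent to every copy of $v$ in $G^{(t)}$; in particular, two vertices of $G^{(t)}$ with adjacent (or equal) shadows can ``simulate'' each other's moves.

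For the upper bound $\cop(G^{(t)}) \le \cop(G)$, I would have $k = \cop(G)$ cops play in $G^{(t)}$ by shadowing a winning strategy in $G$: each cop maintains the invariant that her shadow occupies the vertex dictated by the passive winning strategy on $G$, responding to the \emph{shadow} of the robber's position. Since a move of a cop from $u$ to $v$ (or staying at $u$) in $G$ can be mirrored by a move from any copy of $u$ to any copy of $v$ (or staying) in $G^{(t)}$, this is legal. When the $G$-strategy captures the robber's shadow, some cop in $G^{(t)}$ shares the robber's shadow; this need not be the same vertex, so I would add one extra ``cleanup'' step — but in fact it is cleaner to note that when a cop first moves onto the robber's shadow class, she can be made to land on the robber's actual vertex (the robber must have been on a vertex adjacent to or equal to the cop's previous vertex for the shadow-capture to occur in $G$, and all such vertices in $G^{(t)}$ are reachable). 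So $k$ cops suffice.

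For the lower bound $\cop(G^{(t)}) \ge \cop(G)$, I would argue the contrapositive: a winning cop strategy for $\ell < \cop(G)$ cops in $G^{(t)}$ would yield one for $\ell$ cops in $G$. Project the $G^{(t)}$-strategy down via $\pi$: given the robber's position in $G$, lift it to an arbitrary copy, feed it to the $G^{(t)}$-strategy, and project the cops' responses back to $G$. Adjacency is preserved under $\pi$, so this is a legal strategy in $G$; and capture in $G^{(t)}$ (cop and robber on the same vertex) projects to capture in $G$. This contradicts $\ell < \cop(G)$. I expect the main obstacle to be handling the capture step carefully in the upper bound — ensuring that ``the robber's shadow is caught in $G$'' can be upgraded to ``the robber himself is caught in $G^{(t)}$'' without an extra cop — and this is exactly the point where the hypothesis $\cop(G) \ge 2$ is not actually needed for the inequality $\cop(G^{(t)}) \le \cop(G)$; rather, it is needed only to rule out the degenerate shadow-dodging strategy that shows the reverse inequality fails when $\cop(G)=1$, as the remark preceding the lemma already notes. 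I would flag this asymmetry explicitly in the write-up.
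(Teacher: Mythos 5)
The paper does not prove this lemma at all --- it is quoted as a special case of Schr\"{o}der's Theorem 2.7 --- so the only question is whether your argument stands on its own. Your lower bound ($\cop(G^{(t)}) \ge \cop(G)$ via projection) is essentially sound, modulo the minor point that the robber's position should be lifted \emph{consistently} (lift each move, not each position independently; this works because copies of adjacent vertices are completely joined and copies of a single vertex form an independent set, so the shadow of the real robber's walk is a legal walk in $G$).

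The upper bound, however, has a genuine gap, and it sits exactly where you waved it away. Your ``cleanup'' step --- the cop who captures the robber's shadow in $G$ can land on the robber's actual vertex in $G^{(t)}$ --- is valid only when the shadow-capture occurs on a \emph{cop} move (cop at a copy of $w$, robber at a copy of $r$ with $wr \in E(G)$, so the cop can step onto the robber's exact copy). It fails when the shadow-coincidence is produced by the \emph{robber} moving onto a vertex whose shadow already carries a cop: the two players then sit on distinct copies of the same vertex $r$, and since $S_r$ is independent the cop cannot reach him. This is precisely the shadow-dodging scenario, and it defeats the imagined game on $G$: the imagined robber ``suicides'' onto a cop, the $G$-strategy declares victory, and nothing has been captured in $G^{(t)}$. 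Consequently your closing claim that the hypothesis $\cop(G) \ge 2$ is not needed for $\cop(G^{(t)}) \le \cop(G)$ is false, and you have the roles of the two inequalities reversed: take $G = K_2$, so $\cop(G) = 1$ but $G^{(2)} = C_4$ has cop number $2$. The hypothesis (together with connectivity) is what rescues the upper bound: once some cop's shadow coincides with the robber's, that cop can thereafter capture the robber the instant he changes shadow (she is adjacent to every copy of every $G$-neighbour of $r$), so the robber is frozen on his copy of $r$; a \emph{second} cop then walks through the connected graph $G^{(t)}$ to the robber's exact vertex. Your write-up needs this two-cop pinning phase to close the argument.
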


We are now ready to give our construction of a class of graphs having passive cop number $k$ and fully active cop number $2k$.  In the theorem below, we use $G \cart H$ to denote the Cartesian product of $G$ and $H$.

\begin{thm}\label{thm:bipartite_blowup}
Fix $k \ge 2$, let $T_1, T_2, \ldots, T_{2k-1}$ be nontrivial trees, and let $G = T_1 \cart T_2 \cart \cdots \cart T_{2k-1}$.  Now $\cop(G^{(t)}) = k$ and $\acop(G^{(t)}) = 2k$ whenever $t \ge 2k$.
\end{thm}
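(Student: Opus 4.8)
The plan is to pin down $\cop(G^{(t)})$ first, since it drives both inequalities for $\acop(G^{(t)})$. Each $T_i$ is a nontrivial tree, so by the known formula for the cop number of a Cartesian product of nontrivial trees, $\cop(G) = \lceil ((2k-1)+1)/2 \rceil = k \ge 2$; Lemma \ref{lem:passive_blowup} then gives $\cop(G^{(t)}) = \cop(G) = k$ for every $t$. The upper bound follows at once: by Theorem \ref{bounds}, $\acop(G^{(t)}) \le 2\cop(G^{(t)}) = 2k$.

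The heart of the argument is the lower bound $\acop(G^{(t)}) \ge 2k$, i.e.\ that $2k-1$ cops lose the fully active game on $G^{(t)}$. Since $G$ is a Cartesian product of bipartite graphs it is bipartite, and a blowup of a bipartite graph is bipartite, so $G^{(t)}$ is bipartite with parts $\mathcal A$ and $\mathcal B$. In the fully active game every player changes side on every turn, so each player's side is determined by its starting side and the turn count. The robber places last, in whichever of $\mathcal A, \mathcal B$ contains at least $k$ of the cops; a short parity bookkeeping then shows that the only cops that can ever occupy the robber's vertex -- which can only happen right after a cops' move -- are the at most $k-1$ cops that started in the opposite part. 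Call these the \emph{dangerous} cops. The remaining (at least $k$) cops are harmless: since $t \ge 2k$ exceeds the number of cops, at most $2k-1 < t$ copies of any $G$-vertex are ever occupied, so the robber can always move to a free copy of any $G$-neighbour of his choosing; in particular he is never cornered and never forced onto a cop on his own turn.

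To evade the dangerous cops, I would use that in $G^{(t)}$ the copies of one $G$-vertex form an independent set while copies of adjacent $G$-vertices induce a complete bipartite graph, so a dangerous cop can step onto the robber exactly when its $G$-shadow is $G$-adjacent to the robber's $G$-shadow; hence it suffices for the robber to keep every dangerous cop's shadow at $G$-distance $\ne 1$ from his own forever. Here the extra tree factor is the key. Write $G = G' \cart T_{2k-1}$ with $G' = T_1 \cart \cdots \cart T_{2k-2}$, so $\cop(G') = \lceil ((2k-2)+1)/2 \rceil = k$, and let $\tau$ be a winning robber strategy against $k-1$ cops in the \emph{passive} game on $G'$. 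The robber runs $\tau$ with the $G'$-projections of the dangerous cops playing the cops and the $G'$-projection of his own position playing the robber: when $\tau$ prescribes a move he makes it in $G^{(t)}$ (into a free copy, which exists since $t \ge 2k$); when $\tau$ prescribes a wait he instead steps once along $T_{2k-1}$ (possible as $T_{2k-1}$ is nontrivial), again into a free copy. A dangerous cop's single move changes exactly one $G$-coordinate, so its $G'$-projection either moves to a $G'$-neighbour or stays -- a legal passive cop move -- while the robber's $G'$-projection faithfully tracks $\tau$'s virtual robber; and since $\tau$ is never caught, the robber's $G'$-shadow stays at $G'$-distance $\ge 2$ from every dangerous-cop shadow just before each cops' turn, hence at $G$-distance $\ge 2$ as well. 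So no dangerous cop ever becomes $G$-adjacent to the robber, the robber evades capture forever, and $\acop(G^{(t)}) \ge 2k$.

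The step I expect to require the most care is this last transfer: one must verify that $\tau$ can be run inside the fully active game on $G^{(t)}$ without ever desynchronising -- that the robber can set his initial copy and his $T_{2k-1}$-coordinate so as to land simultaneously in the crowded part and on $\tau$'s prescribed $G'$-start, that the wait-versus-$T_{2k-1}$-step substitution never perturbs the $G'$-coordinates that $\tau$ reasons about, and that a free copy to route through is always available. The bipartite parity bookkeeping isolating the $\le k-1$ dangerous cops is the other place to be careful, but it is routine.
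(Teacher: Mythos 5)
Your proposal is correct, and while your treatment of $\cop(G^{(t)})=k$ and of the upper bound coincides with the paper's (Maamoun--Meyniel plus Lemma~\ref{lem:passive_blowup}, then Theorem~\ref{bounds}), your lower bound takes a genuinely different route. The paper's robber argues locally and coordinate-by-coordinate: just before each of his turns, the at most $k-1$ cops lying in his own partite set are the only ones at distance $2$, their shadows differ from his in at most two coordinates each, and since $2(k-1)<2k-1$ there is a coordinate in which he agrees with all of them; he steps in that coordinate to a cop-free copy. You instead isolate the same $\le k-1$ ``dangerous'' cops by the parity bookkeeping, project onto $G'=T_1\cart\cdots\cart T_{2k-2}$, and simulate a passive-game evasion strategy against their $G'$-projections (which exists since $\cop(G')=\lceil(2k-1)/2\rceil=k$ by Maamoun--Meyniel again, together with determinacy of the game), spending a step in $T_{2k-1}$ whenever the simulated strategy would wait. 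The points you flagged as delicate do all go through: a winning passive robber strategy necessarily keeps the robber at distance at least $2$ from every cop after each of his moves, distances in a Cartesian product add over the factors so $G'$-distance $\ge 2$ forces $G$-distance $\ge 3$ by parity, every cop's move in $G^{(t)}$ changes its shadow by exactly one $G$-edge so its $G'$-projection makes a legal passive move, $t\ge 2k>2k-1$ always leaves a free copy, and the nontrivial last tree lets the robber adjust his initial parity to land in the crowded partite set while sitting on the prescribed $G'$-start. The trade-off: the paper's argument is shorter and entirely self-contained, needing nothing beyond counting coordinates, whereas yours invokes the passive lower bound a second time but is more conceptual --- it explains the role of the ``extra'' tree factor as a built-in waiting move, and would adapt to blowups of $H\cart T$ for any bipartite $H$ with $\cop(H)\ge k$ and any nontrivial tree $T$.
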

\begin{proof}
It follows from a result of Maamoun and Meyniel (\cite{MM87}, Theorem 2) that $\cop(G) = k$, hence Lemma~\ref{lem:passive_blowup} implies that $\cop(G^{(t)}) = k$.

For the active game, Theorem~\ref{bounds} shows that $\acop(G^{(t)}) \le 2\cop(G^{(t)}) = 2k$, so it suffices to show that $\acop(G^{(t)}) > 2k-1$.  Suppose the robber plays against $2k-1$ cops on $G^{(t)}$.  We view each vertex $v$ in $G$ as a $(2k-1)$-tuple $(v_1, v_2, \ldots, v_{2k-1})$, where $v_i \in V(T_i)$ for $1 \le i \le 2k-1$; we call $v_i$ the {\em $i$th coordinate} of $v$.  Note that every two adjacent vertices in $G$ differ in exactly one coordinate, and any two vertices at distance 2 differ in at most two coordinates.  Since each $T_i$ is bipartite, so are $G$ and $G^{(t)}$; let $X$ and $Y$ denote the partite sets of $G^{(t)}$. 

Once the cops have chosen their initial positions, some partite set, without loss of generality $X$, must contain at most $k-1$ cops.  For his initial position, the robber chooses any vertex in $Y$ that neither contains a cop nor is adjacent to a cop.  To see that this is possible, first note that for $v \in V(G^{(t)})$, we can bound the degree of $v$ by
$$\deg(v) \le t \cdot \sum_{i=1}^{2k-1} (\size{V(T_i)}-1).$$
Since at most $k-1$ cops occupy vertices of $X$ and at most $2k-1$ occupy vertices of $Y$, the number of vertices of $Y$ containing or adjacent to cops is at most $2k-1 + (k-1)t \cdot \sum_{i=1}^{2k-1}(\size{V(T_i)}-1)$.  We next estimate $\size{Y}$.  By symmetry, we may suppose that among $T_1, T_2, \ldots, T_{2k-1}$, none has fewer vertices than $T_{2k-1}$.  Suppose $u \in V(G^{(t)})$, let $v$ be the shadow of $u$ in $G$, and let $w$ be any neighbor of $v$ that agrees with $v$ in the first $2k-2$ coordinates.  Either $u \in Y$ and so all copies of $v$ belong to $Y$, or $u \in X$ and hence all copies of $w$ belong to $Y$.  Consequently, for all $v_1, v_2, \ldots, v_{2k-2}$ with $v_i \in V(T_i)$ for $1 \le i \le 2k-2$, there exists some $v_{2k-1}$ in $T_{2k-1}$ such that all copies of $(v_1, v_2, \ldots, v_{2k-1})$ belong to $Y$.  We conclude that
% the below isn't explained well (or at all, really), but should be true
$$\size{Y} \ge t \cdot \prod_{i=1}^{2k-2}\size{V_i} > 2k-1 + (k-1)t \cdot \sum_{i=1}^{2k-1} (\size{V(T_i)}-1),$$
so the robber may always choose a starting vertex that meets his criteria.

The robber's choice of initial position ensures that he cannot lose on the cops' first turn.  To show that the robber can avoid losing on subsequent turns, it suffices to show that he can always move to some vertex not containing a cop and not adjacent to a cop.  Suppose it is the robber's turn, and assume without loss of generality that the robber occupies some vertex in $Y$; a similar argument works for the other case.  Since $G^{(t)}$ is bipartite and the robber started in $Y$, he must have taken an even number of turns; consequently, the cops have taken an odd number of turns.  Thus every cop who started in $X$ is now in $Y$, and vice-versa.  In particular, there are at most $k-1$ cops in $Y$, and hence at most $k-1$ cops at distance 2 from the robber.  Let $v$ be the shadow in $G$ of the robber's current position, and let $w_1, w_2, \ldots, w_{2k-1}$ be the shadows of the cops' positions.  Since the robber was not adjacent to a cop after his previous turn, $v$ does not coincide with any of the $w_i$, and moreover, at most $k-1$ of the $w_i$ are at distance 2 from $v$.  Each $w_i$ at distance 2 from $v$ differs from $v$ in at most two coordinates, so there is some coordinate in which $v$ agrees with all such $w_i$.  Let $v'$ be some neighbor of $v$ that differs only in this coordinate, and note that $v'$ is not adjacent to any $w_i$ (although it might coincide with one or more).  Since $t \ge 2k$, there are at least $2k$ copies of $v'$.  As there are only $2k-1$ cops, some copy of $v'$ contains no cop.  The robber moves to any such copy of $v'$; by construction, the robber's new position contains no cop and is not adjacent to any cop, as desired.  It follows that the robber may evade capture indefinitely.
\end{proof}

Theorem~\ref{thm:bipartite_blowup} shows that for all $k \ge 2$, there exist graphs with cop number $k$ and fully active cop number $2k$, and thus $\acop(G) - \cop(G)$ can be arbitrarily large.  Note that Proposition \ref{copwin} shows that the requirement of $k \geq 2$ cannot be dropped from the previous statement.  
%In contrast, it is not hard to see that every graph with cop number 1 also has fully active cop number 1.  In the usual model of Cops and Robbers played with a single cop, it never helps the cop to remain on her current vertex, since the robber could respond by staying on his current vertex.  Hence, on any graph with cop number 1, there is a winning cop strategy on which the cop moves on every turn; this is a winning strategy in the fully active game as well.  

\section{Graph products}\label{sec:products}

In this section, we further consider the fully active game played on graph products.   For general graphs, we have the following result.  Note that the restriction imposed on $G_1$ and $G_2$ is not as stringent as it might first appear: for example, any non-bipartite graph meets this requirement, since in a non-bipartite graph the cops can reach any configuration from any other.  The restriction also holds for any graph with fully active cop number 1.

\begin{thm}\label{thm:prod_general}
If $G_1$ and $G_2$ are graphs such that $\acop(G_2)$ cops can win the fully active game on $G_2$ regardless of their initial positions, then $\acop(G_1 \cart G_2) \le \acop(G_1) + \acop(G_2)$. 
\end{thm}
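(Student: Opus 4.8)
The plan is to split the $a_1+a_2$ cops (writing $a_i=\acop(G_i)$) into a ``first-coordinate team'' $\mathcal{A}$ of $a_1$ cops and a ``second-coordinate team'' $\mathcal{B}$ of $a_2$ cops, and to reduce the game on $G_1\cart G_2$ to the two factor games by a shadow strategy. The structural fact used throughout is that every edge of a Cartesian product changes exactly one coordinate, so on each turn the robber either changes his first coordinate (a ``type-1'' move) or his second coordinate (a ``type-2'' move); in particular his two coordinate-shadows never move on the same turn.

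Stage~1 is to have team $\mathcal{A}$ force a cop onto the robber's first coordinate and keep it there forever. Team $\mathcal{A}$ runs a winning fully active strategy for $G_1$ against the robber's first-coordinate shadow: having seen the robber's last move, it answers a type-1 move by a move of $G_1$ (played in the first coordinate of the product), and it answers a type-2 move — on which the shadow did not move — by moving only in the second coordinate, i.e.\ it ``waits'' in $G_1$. Once some cop of $\mathcal{A}$ shares the robber's first coordinate, that cop maintains this forever: on each subsequent turn it mirrors a type-1 move in the first coordinate (possible, since the robber moved along an edge of $G_1$), or on a type-2 turn moves freely in the second coordinate (possible as long as $G_2$ has no isolated vertex). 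From then on the robber gains nothing by a type-1 move, so we may assume all his remaining moves are of type~2.

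Stage~2 is then immediate: the play projects to a genuine fully active game on $G_2$ (with the locked cop of $\mathcal{A}$ riding along in the robber's $G_1$-fibre), team $\mathcal{B}$ has been carried to some uncontrolled configuration on $G_2$, and by hypothesis $a_2$ cops win the fully active game on $G_2$ from \emph{any} configuration — so team $\mathcal{B}$ captures the robber.

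I expect the real obstacle to be Stage~1 with only $\acop(G_1)$ cops. The first-coordinate shadow need not move on every turn — it ``passes'' on exactly the type-2 turns — and in general a robber allowed to pass can evade more cops than a fully active one, so one cannot simply quote $\acop(G_1)$. What should rescue the argument is that a passing turn is precisely a turn on which the robber commits to his second coordinate: the progress of the two teams has to be dovetailed, via a potential/bookkeeping argument (or a case split on whether the robber changes his first coordinate finitely or infinitely often), so that the robber cannot stall team $\mathcal{A}$ forever without handing control of the game to team $\mathcal{B}$. Keeping the two teams' moves consistent — recalling that every cop commits before the robber does on each turn — is where the care lies; the ``maintenance'' step and all of Stage~2 are routine once that is in place, and the restriction on $G_2$ is exactly what lets team $\mathcal{B}$ take over from an arbitrary configuration.
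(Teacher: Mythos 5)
Your Stage~2 is where the argument breaks, not Stage~1, and it breaks in two ways. First, the claim that once one cop of $\mathcal{A}$ shares the robber's first coordinate ``the robber gains nothing by a type-1 move'' is false: every type-1 turn is a turn on which the robber's $G_2$-shadow stands still, so team $\mathcal{B}$'s projected game on $G_2$ makes no progress, and the locked cop must spend that turn mirroring in $G_1$ to keep her lock, so she never closes her $G_2$-gap either. The robber can oscillate along a single edge of $G_1$ forever after the lock and is never captured. Second, and more fundamentally, winning the projected game on $G_2$ means some cop of $\mathcal{B}$ matches the robber's \emph{second} coordinate --- that is not capture, since her first coordinate may still differ. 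A cop who has locked onto the robber in factor $i$ must then go on to win a fully active game against him in factor $3-i$; so the cops who end up locked in $G_1$ must number $\acop(G_2)$, and those locked in $G_2$ must number $\acop(G_1)$. Your static split assigns the sizes the wrong way around, and you cannot know in advance which factor the robber will allow you to lock onto. Concretely, take $G_1$ the Petersen graph and $G_2 = C_3$: if the robber moves only in the $C_3$ factor, your team $\mathcal{B}$ is a single cop who, after matching his $C_3$-coordinate, would have to catch him single-handedly in the Petersen graph.

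The paper's proof avoids both problems by pooling all $\acop(G_1)+\acop(G_2)$ cops into one hunt for \emph{both} shadows and assigning each cop to ``Team $G_i$'' dynamically at the moment she catches the shadow in $G_i$; by pigeonhole, eventually either $\acop(G_2)$ cops are locked in $G_1$ or $\acop(G_1)$ cops are locked in $G_2$, and that team then runs a winning strategy in the \emph{other} factor from whatever positions it happens to occupy --- which is exactly where the hypothesis that $\acop(G_2)$ cops win on $G_2$ from any initial configuration is used (with a separate parity argument when $G_1$ lacks this property). Incidentally, your stated worry about Stage~1 --- that the $G_1$-shadow ``passes'' and so $\acop(G_1)$ cannot be quoted --- is not quite the right worry: since team $\mathcal{A}$ also idles in $G_1$ on type-2 turns, the projected game on $G_1$ is a legitimate fully active game that merely pauses, and the only danger is that it pauses forever; that is handled by the finitely-often/infinitely-often case split you mention. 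The genuine missing ideas are the crossed team sizes and the dynamic assignment.
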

\begin{proof}
First suppose that both $G_1$ and $G_2$ have the property that $\acop(G_i)$ cops can win the fully active game on $G_i$ regardless of their initial positions; we explain at the end of the proof how we can eliminate this restriction on $G_1$.  Let $k = \acop(G_1)$ and $\ell = \acop(G_2)$; we show that $k+\ell$ cops can win the fully active game on $G_1 \cart G_2$.  As usual, when a player occupies the vertex $(u,v)$ in $G_1 \cart G_2$, we say that $u$ (resp. $v$) is that player's {\em position in $G_1$} (resp. {\em position in $G_2$}).  

The cops will divide themselves into two teams, ``Team $G_1$'' and ``Team $G_2$''.  Initially, both teams are empty.  The cops fix winning strategies for $k+\ell$ cops in the fully active games on both $G_1$ and $G_2$.  Cop $i$ begins on vertex $(u,v)$, where $u$ denotes her starting position in the game on $G_1$ and $v$ denotes her starting position on $G_2$.  The cops now play as follows.  On their first turn, each cop moves in $G_1$ according to the cops' winning strategy for the game on $G_1$.  On subsequent turns, the cops respond to the robber's previous move as follows: if the robber moved in $G_i$, then the cops move in $G_i$ according to a winning strategy for the game on $G_i$.  Eventually, some cop must capture the robber in one of the games.  If a cop captures the robber in the game on $G_i$, then her position in $G_i$ agrees with the robber's, and the cop joins Team $G_i$.  Henceforth, whenever the robber moves in $G_i$, each cop in Team $G_i$ makes an identical move in $G_i$ (and when the robber moves in the other graph, these cops make arbitrary moves in that graph).  This ensures that each cop in Team $G_i$ always occupies the same position as the robber in $G_i$.

The cops now repeat this process: the remaining $k+\ell-1$ cops mimic winning strategies in the fully active games on $G_1$ and $G_2$ until one cop captures the robber either on $G_1$ (in which case she joins Team $G_1$) or on $G_2$ (in which case she joins Team $G_2$).  The cops continue in this manner.   Eventually, either $\ell$ cops have joined Team $G_1$ or $k$ cops have joined Team $G_2$; without loss of generality, assume the former.  

The cops now change their strategies slightly.  Henceforth, when the robber moves in $G_1$, the cops in Team $G_1$ continue to follow him; when the robber moves in $G_2$, these cops move in $G_2$ according to a winning strategy for the game on $G_2$.  Note that the robber cannot move in $G_2$ infinitely often, or some cop in Team $G_1$ will reach the same position as the robber in $G_2$, at which point she captures the robber in $G_1 \cart G_2$.  

The remaining cops play slightly differently.  When the robber moves in $G_1$, the cops in Team $G_2$ move arbitrarily in $G_1$, while the unassigned cops move toward the robber in $G_2$.  When the robber moves in $G_2$, the cops in Team $G_2$ follow him in $G_2$, while the unassigned cops again move toward him in $G_2$.  If at any point one of the unassigned cops reaches the same position as the robber in $G_2$, then that cop joins Team $G_2$.  Since the robber cannot move in $G_2$ infinitely often, he must move in $G_1$ infinitely often, so eventually all of the unassigned cops catch up to the robber in $G_2$ and thus join Team $G_2$.  Thus eventually Team $G_2$ comes to contain $k$ cops.  At this point, whenever the robber moves in $G_1$, the cops in Team $G_2$ move through $G_1$ according to a winning strategy in the game on $G_1$.  (When he moves in $G_2$, they continue to follow him in $G_2$.)  Since the robber moves in $G_1$ infinitely often, eventually one of the cops in Team $G_2$ will capture the robber in $G_1$ and hence in $G_1 \cart G_2$.

This completes the proof when both $G_1$ and $G_2$ have the property that $\acop(G_i)$ cops can capture the robber on $G_i$ regardless of their starting positions.  Suppose now that $G_2$ has this property but $G_1$ does not.  Note that $G_1$ must necessarily be bipartite; call its partite sets $X$ and $Y$.  Consider a winning strategy for the game on $G_1$, and suppose that under this strategy $k_X$ cops begin on vertices of $X$ while $k_Y$ begin on vertices of $Y$.  We claim that $k_X+k_Y$ cops can win the game on $G_1$ for any initial arrangement that places $k_X$ cops in $X$ and $k_Y$ in $Y$: indeed, from any such arrangement, the cops can reconfigure themselves to their desired starting positions.

The cops amend their strategy on $G_1 \cart G_2$ as follows.  All cops assigned to Team $G_1$ or Team $G_2$ play as normal, but the unassigned cops play slightly differently.  After any cop joins a team, before restarting their strategy to capture the robber on $G_2$, the unassigned cops slightly rearrange their positions.  If fewer than $k_X$ cops in Team $G_2$ currently occupy vertices whose position in $G_1$ belongs to $X$, then the unassigned cops move so that, after an even number of steps, every unassigned cop's position in $G_1$ belongs to $X$.  Should one of these cops join Team $G_2$ on the next iteration of the strategy, then after an even number of steps, her position in $G_1$ will belong to $X$.  If instead exactly $k_X$ cops in Team $G_2$ occupy vertices whose position in $G_1$ belongs to $X$, then the unassigned cops move so that after an even number of steps, each occupies a vertices whose position in $G_1$ belongs to $Y$; this ensures that if one of these cops joins Team $G_2$, then her position in $G_1$ will belong to $Y$.  By playing thus, the cops ensure that once Team $G_2$ has been completely filled, the cops' positions in $G_1$ will correspond to an initial configuration from which they can win the game on $G_1$.  From this point, the cops resume following the strategy above, which ensures that they eventually capture the robber. 
\end{proof}

Equality in Theorem~\ref{thm:prod_general} does not hold in general.  For example, $\acop(K_2) = 1$ and $\acop(C_4) = 2$, but $\acop(K_2 \cart C_4) = 2 < \acop(K_2) + \acop(C_4)$.  Note also that the restriction on $G_2$ cannot be lifted.  Fix any positive integers $k,t$ with $k \ge 2$ and $t \ge 4k$.  By Theorem \ref{thm:bipartite_blowup}, we have $\acop(Q_{2k-1}^{(t)}) = 2k$.  By Theorem \ref{OO} (see below), we have $\acop(Q_{2k}) = \lceil 4k/3 \rceil$.  But now
$$\acop(Q_{2k-1}^{(t)} \cart Q_{2k}) = \acop((Q_{2k-1}\cart Q_{2k})^{(t)}) = \acop(Q_{4k-1}^{(t)}) = 4k$$
by Theorem \ref{thm:bipartite_blowup}, but 
$$\acop(Q_{2k-1}^{(t)}) + \acop(Q_{2k}) = 2k + \left\lceil \frac{4k}{3} \right\rceil.$$

We now turn our attention to more restricted classes of Cartesian products.  Possibly the most notable of all graph products is the $n$-dimensional hypercube, denoted $Q_n$.  Offner and Ojakian \cite{OO14} studied a wide class of variants of Cops and Robbers played on the hypercube, in which some cops must move on every cop turn, while others have the option of remaining in place.  The theorem below is a special case of their results.

\begin{thm}\label{OO}
For any positive integer $n$, we have $\acop(Q_n) = \left\lceil\dfrac{2n}{3}\right\rceil.$
\end{thm}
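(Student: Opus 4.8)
The plan is to prove the two bounds separately, with the bipartition of $Q_n$ as the common thread. Write $c=\lceil 2n/3\rceil$ and let $X,Y$ be the two parts of $Q_n$. The pivotal observation is that, for any single cop and the robber---both of whom move on every turn---the parity of the Hamming distance between them, measured just after each robber move, is fixed for the rest of the game once the robber has chosen his starting vertex; it is odd precisely when the cop and robber lie in opposite parts. Call a cop \emph{opposing} if it lies in the part not containing the robber. An opposing cop is at even distance from the robber just before a robber move, so it can sit at distance exactly $2$ there (having been at distance $3$ just after the robber's previous move, it steps in on its turn); flipping either of the two coordinates in which it then differs from the robber would bring the robber to distance $1$, hence into capture on the next cop turn, so such a cop \emph{forbids two coordinates}. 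A same-part cop is at odd distance just before a robber move, so it can reach distance $1$---forbidding one coordinate---but never distance $2$.

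For the lower bound $\acop(Q_n)\ge c$: against $c-1$ cops, the robber starts in whichever of $X,Y$ contains the majority of the cops, so at most $\lfloor(c-1)/2\rfloor$ cops oppose him; a crude count (the chosen part contains far fewer than $2^{n-1}$ vertices within distance $1$ of a cop once $n$ is not tiny, the cases $n\le 3$ being immediate) also lets him start at distance $\ge 2$ from every cop. He then maintains the invariant that after each of his moves he is at distance $\ge 2$ from every cop, which alone prevents capture since a cop closes distance by at most $1$ per turn. On each of his turns the opposing cops forbid at most $2$ coordinates apiece and the remaining cops at most $1$ apiece, a total of at most $2\lfloor(c-1)/2\rfloor+\bigl((c-1)-\lfloor(c-1)/2\rfloor\bigr)=(c-1)+\lfloor(c-1)/2\rfloor$ coordinates; a one-line check shows this is at most $n-1$ for $c=\lceil 2n/3\rceil$. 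Hence a free coordinate always exists, and flipping it restores the invariant.

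For the upper bound $\acop(Q_n)\le c$: the cops split as evenly as possible between $X$ and $Y$, so that whichever part the robber selects at least $\lfloor c/2\rfloor$ cops oppose him. The cops then drive the opposing cops to distance $2$ and the rest to distance $1$ from the robber on each of his turns, so that collectively they can forbid up to $2\lfloor c/2\rfloor+(c-\lfloor c/2\rfloor)=c+\lfloor c/2\rfloor$ coordinates, which is at least $n$ when $c=\lceil 2n/3\rceil$. The real content is showing the cops can coordinate so that the forbidden coordinates eventually exhaust all $n$: each opposing cop's forbidden pair may, on each turn, absorb whichever coordinate the robber used to flee (and likewise for the single coordinate blocked by a same-part cop), so a potential-function argument---counting coordinates that have become permanently ``pinned''---lets the cops herd the robber until every coordinate is forbidden, at which point his forced next move loses. (One could alternatively decompose $Q_n=Q_3\cart\cdots\cart Q_3\cart Q_r$ with $r\in\{1,2,3\}$ and apply Theorem~\ref{thm:prod_general}, but with care: two cops do \emph{not} win on $Q_3$ from every initial configuration---for instance, from a single shared vertex---so one must supply a sharper small-case lemma in which the cops begin with one per part.)

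The step I expect to be the main obstacle is this coordination in the upper bound. For the robber it suffices that \emph{some} free coordinate exist, which the counting above guarantees; but the cops must actively \emph{steer} which coordinates are blocked, and proving they can always make measurable progress toward covering all $n$---and choosing the potential that certifies it---is where the work concentrates. The parity bookkeeping (keeping opposing cops at distance $2$ and same-part cops at distance $1$ at the right moments) and the base cases on $Q_1,Q_2,Q_3$ are the remaining technical points.
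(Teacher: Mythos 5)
Your lower bound is essentially complete and correct: the parity classification of cops into ``opposing'' cops (which can forbid two coordinates from distance $2$) and same-part cops (which forbid one from distance $1$), the choice of the majority part, and the count $(c-1)+\lfloor(c-1)/2\rfloor\le n-1$ are exactly the right argument, and the invariant you maintain does prevent capture. (For the record, the paper does not prove Theorem~\ref{OO} at all --- it is quoted from Offner and Ojakian --- but your lower-bound argument coincides with the one the paper uses for the generalization in Theorem~\ref{thm:cartesian_trees}.)

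The upper bound has a genuine gap, and it is the one you flag yourself. ``Drive the opposing cops to distance $2$ and the rest to distance $1$ on each robber turn'' describes the desired end state, not a strategy: a cop cannot in general hold distance exactly $2$ against a robber who keeps fleeing, and the dynamic bookkeeping you propose (each cop's forbidden set ``absorbing whichever coordinate the robber used to flee'') is not obviously consistent --- several cops may compete for the same coordinate, and nothing in your sketch prevents the robber from later un-pinning a coordinate, so the potential you gesture at is not shown to be monotone. The repair is a \emph{static} assignment of coordinates to cops, which is precisely what the paper does in Theorem~\ref{thm:cartesian_trees}, of which Theorem~\ref{OO} is the special case $T_1=\cdots=T_n=K_2$: give each of the $\lceil c/2\rceil$ same-part cops one designated coordinate and each of the $\lfloor c/2\rfloor$ opposing cops two designated coordinates, so that the inequality $\lceil c/2\rceil+2\lfloor c/2\rfloor=c+\lfloor c/2\rfloor\ge n$ (your own count) lets every coordinate be designated to some cop. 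Each cop first greedily closes her distance to the robber in her non-designated coordinates, then wins the one- or two-dimensional game in her designated coordinates (the two-coordinate case is the $C_4$ instance of Lemma~\ref{lem:tree_odd}, playable because such a cop sits at odd distance). The potential is then clean: every robber move in coordinate $i$ strictly advances the cop owning $i$, and no cop ever loses progress. Your parenthetical observation that decomposing $Q_n$ as $Q_3\cart\cdots\cart Q_3\cart Q_r$ and invoking Theorem~\ref{thm:prod_general} fails (two cops cannot win the fully active game on $Q_3$ from an arbitrary initial configuration) is correct, and is exactly why the coordinate-assignment argument is needed instead.
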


% Since $\cop(Q_n) = \left\lceil\dfrac{n+1}{2}\right\rceil$ (see \cite{MM87}), we thus have the following:

%\begin{cor}
%For any positive integer $k$, there exists a graph for which $\size{\acop(G) - \cop(G)} > k$.
%\end{cor}

We extend Theorem~\ref{OO} to the more general setting where the game is played on the Cartesian product of arbitrary nontrivial trees.  We begin with a lemma.

\begin{lem}\label{lem:tree_odd}
Let $T_1$ and $T_2$ be nontrivial trees, and consider the fully active game played with a single cop on $T_1 \cart T_2$.  If the cop and robber begin the game at odd distance from each other, then the cop can capture the robber.
\end{lem}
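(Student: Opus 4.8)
The plan is to give the single cop an explicit, deliberately lopsided pursuit strategy and to use the cop--robber distance as a monovariant, leaning on the bipartiteness of $T_1\cart T_2$ to forbid an infinite game. Write the cop's vertex as $(a,b)$ and the robber's as $(x,y)$, set $p=d_{T_1}(a,x)$ and $q=d_{T_2}(b,y)$, and let $\Phi:=p+q$ be the distance between the players in $T_1\cart T_2$. Because $T_1\cart T_2$ is bipartite, each single move (by either player) flips the parity of $\Phi$; as play starts with $\Phi$ odd and the cop moves first, $\Phi$ is odd just before every cop move, hence at least $1$. The cop therefore wins exactly by reaching one of its turns with $\Phi=1$: then $(p,q)\in\{(1,0),(0,1)\}$, and moving toward the robber in the nonzero coordinate places the cop on the robber.

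Here is the strategy: if $p\ge 2$, the cop moves toward the robber in $T_1$; otherwise the cop moves toward the robber in $T_2$ (in the remaining case $p=1$, $q=0$, moving toward the robber in $T_1$ is the capturing move, which we take instead). This is always legal, and the two ``capture'' situations are precisely the turns at which $\Phi=1$. The first point is that $\Phi$ is non-increasing from one cop turn to the next, since the cop's move lowers $\Phi$ by $1$ and the robber's move changes it by $\pm 1$; it therefore suffices to show that $\Phi$ cannot stay constant forever, for then it runs down through the odd integers to $1$. (A naive strategy in which the cop always responds in the coordinate the robber just moved in fails: the robber repeatedly threatens in one coordinate, forcing the cop to keep re-matching there and never progress in the other, which is exactly the difficulty the move-every-turn rule creates.)

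Suppose $\Phi$ is constant from some round onward. Then in each such round the robber must \emph{flee}: his move contributes $+1$ to $\Phi$, i.e.\ he moves to a neighbour farther from the cop in whichever of $T_1,T_2$ he moves in. I would defeat this with a secondary monovariant. Given the robber's $T_1$-coordinate $x$ and the cop's $T_1$-coordinate $a$, let $h$ be the length of the longest path in $T_1$ starting at $x$ and moving away from $a$ (the longest path from $x$, if $a=x$); define $h'$ in $T_2$ the same way. On a round in which the robber flees in $T_1$, his $T_1$-coordinate steps one level deeper into the subtree ``away from $a$'', so $h$ drops by at least $1$, while $h'$ is unchanged (on that round the cop either leaves its $T_2$-coordinate fixed or moves it toward the robber, and neither changes the ``away'' direction out of $y$); the symmetric statement holds for a round in which the robber flees in $T_2$. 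Since, under the flee assumption, every such round is a flee in $T_1$ or a flee in $T_2$, the nonnegative integer $h+h'$ strictly decreases every round, which is impossible over infinitely many rounds. Concretely, the constant-$\Phi$ run lasts at most $\diam(T_1)+\diam(T_2)$ rounds, and at its end $h=h'=0$, forcing $x$ and $y$ to be leaves whose only neighbours point back toward the cop, so the robber's next move lowers $\Phi$ --- contradicting constancy. Hence $\Phi$ reaches $1$ in finitely many rounds and the cop captures the robber.

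The crux, I think, is twofold: first, recognising that the \emph{lopsided} rule (full priority to $T_1$) is what is needed --- the symmetric-looking strategies stall because of the forced moves --- and second, the careful bookkeeping for the forward heights $h,h'$ in the small-distance cases ($p\in\{0,1\}$, where the cop plays in $T_2$), together with the observation that the cop never overshoots the robber in a coordinate because it advances in a coordinate only when its distance there is at least $2$. With those in hand the argument is short. One could instead try an induction on $|V(T_1)|+|V(T_2)|$ by retracting a leaf of the larger tree, but reconciling such a retraction with the requirement that every player move every turn looks delicate, so I would prefer the direct pursuit above.
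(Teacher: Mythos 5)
Your proof is correct and follows essentially the same approach as the paper's: a greedy coordinate-wise pursuit whose legality is guaranteed by the parity invariant, a distance potential that is non-increasing over each round, and the observation that the robber can flee (and hence stall the potential) only finitely often in a tree. The paper uses the potential $\max\{d_1,d_2\}$ with the tie-breaking rule ``always reduce the larger coordinate distance'' and bounds the stalling informally by $\diam(T_1)+\diam(T_2)$, whereas you use the sum $d_1+d_2$ with a lopsided rule and make the stalling bound rigorous via the forward-height monovariant $h+h'$ --- a cosmetic difference, though your treatment of the finiteness of fleeing is the more careful of the two.
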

\begin{proof}
We view vertices of $T_1 \cart T_2$ as pairs $(v_1,v_2)$ with $v_i \in V(T_i)$; when a player is located at this vertex, we call $v_i$ that player's {\em position in $T_i$}.  We give a winning strategy for the cop.  The cop's strategy is simple.  At each point in the game, let $d_i$ denote the distance (in $T_i$) from the cop's position in $T_i$ to the robber's position in $T_i$.  On each cop turn, if $d_1 > d_2$, then the cop takes one step closer to the robber in $T_1$; otherwise, she takes one step closer to the robber in $T_2$.  Note that since $T_1 \cart T_2$ is bipartite and the cop is at an odd distance from the robber before her first turn, she must be at odd distance from the robber before each of her turns for the duration of the game.  Hence $d_1 \not = d_2$ and, consequently, the cop's move always decreases $\max\{d_1,d_2\}$ by 1.

To see that the cop eventually captures the robber, it suffices to show that $\max\{d_1, d_2\}$ gradually decreases throughout the game.  It is clear that $\max\{d_1, d_2\}$ never increases over the course of a full round (that is, a cop turn together with the subsequent robber turn): the cop's move decreases $\max\{d_1,d_2\}$ by 1, while the robber's move increases it by at most 1.  Moreover, the robber can increase $\max\{d_1,d_2\}$ only by moving away from the cop in the appropriate tree.  However, he cannot do this forever: the robber can take at most $\diam(T_1)$ steps away from the cop in $T_1$ and at most $\diam(T_2)$ steps in $T_2$, so after at most $\diam(T_1)+\diam(T_2)$ rounds the robber must take at least one step toward the cop.  Thus, in this round $\max\{d_1,d_2\}$ decreases.  Consequently, $\max\{d_1,d_2\}$ eventually reaches 0, at which point the cop has captured the robber.
\end{proof}

\begin{thm}\label{thm:cartesian_trees}
Let $\{T_{1}, T_{2}, \ldots, T_{k}\}$ be nontrivial trees. If $G = T_{1} \cart T_{2} \cart \cdots \cart T_{k}$, then $\acop(G) = \left\lceil\dfrac{2k}{3}\right\rceil$.
\end{thm}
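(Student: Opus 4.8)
The plan is to prove the two inequalities $\acop(G) \ge \ceil{2k/3}$ and $\acop(G) \le \ceil{2k/3}$ separately, and in both directions to reduce to the hypercube via Theorem~\ref{OO} --- note that $Q_k$ is itself a product of $k$ nontrivial trees, so it is the extremal case.

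\emph{Lower bound.} Each nontrivial tree $T_i$ admits a proper $2$-colouring, i.e.\ a homomorphism $r_i \colon T_i \to K_2$, and these assemble into a homomorphism $r = r_1 \times \cdots \times r_k \colon G \to Q_k$. Since a homomorphism of simple graphs sends each edge to an edge, every cop move in $G$ projects to a genuine (non-stalling) move in $Q_k$; moreover every neighbour of a vertex $v_i$ of $T_i$ receives the colour opposite to $v_i$. Hence the robber can lift a winning strategy against $\ceil{2k/3}-1$ cops on $Q_k$ (one exists by Theorem~\ref{OO}) to $G$: he maintains the invariant that $r$ carries his position and the cops' positions to the corresponding positions of a simulated game on $Q_k$; whenever his $Q_k$-strategy calls for a move in coordinate $j$ he moves his $j$th coordinate to an arbitrary $T_j$-neighbour, which automatically has the colour required. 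He is never caught in $G$ because he is never caught in $Q_k$, so $\acop(G) \ge \acop(Q_k) = \ceil{2k/3}$.

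\emph{Upper bound, reduction.} Partition $\{T_1,\dots,T_k\}$ into $\floor{k/3}$ blocks of three trees together with one leftover block of $k \bmod 3$ trees, and build $G$ by iterated Cartesian products of the blocks. At each step the already-built factor is a product of trees, hence bipartite, so Theorem~\ref{thm:prod_general} applies --- its hypothesis is exactly the one it places on the \emph{second} factor, and its proof already removes the hypothesis on the first. Using $\acop(T)=1$ for a single tree, it therefore suffices to prove the following claim: \emph{two cops can win the fully active game, from any starting positions, on the Cartesian product of any two or any three nontrivial trees}. Granting this, a block of three costs two cops and the leftover block of size $r\in\{1,2\}$ costs $r$ cops, for a total of $2q$, $2q+1$, $2q+2$ cops when $k$ equals $3q$, $3q+1$, $3q+2$; in each case this equals $\ceil{2k/3}$, matching the lower bound. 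The case of two trees is comparatively easy: each cop matches one coordinate by a one-coordinate tree chase and then \emph{holds} it (copying the robber's moves in that coordinate) while spending every other move chasing the robber in the other coordinate; because the two held coordinates are distinct, whenever the robber leans on one coordinate the cop holding the \emph{other} one is free and runs him down in the coordinate he is actually using, and since the robber must move in some coordinate infinitely often, one cop eventually matches both and captures him. (It is convenient to regroup the moves so that each cop reacts to the robber's move within the same round; then the relevant distances stay monotone and the ``pass'' rounds are synchronised between cop and robber.)

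\emph{The three-tree case: the main obstacle.} I would use the same ``hold what you have matched, chase the rest'' scheme, now with cop $A$ responsible for coordinates $1$ and $3$ and cop $B$ for coordinate $2$; each cop first matches one of her coordinates, holds it, and adaptively chases whichever of her coordinates is still unmatched, switching targets as matches are achieved. The key structural point is that the held sets $\{1,3\}$ and $\{2\}$ are \emph{disjoint}, so whatever single coordinate the robber leans on, at most one cop is forced to mirror him there and the other cop is free. A case analysis on the set of coordinates the robber moves in infinitely often should then show that some cop eventually matches all three coordinates, her final catching move being a chase step in a tree that lands on the robber. The delicate points, where I expect the bulk of the work to lie, are: (i) checking that the hold and chase instructions never conflict, so that each cop makes exactly one legal move per turn and never loses an established match; (ii) the boundary cases in which the robber eventually confines his moves to one or two coordinates, freezing the others, which force the non-pinned cop to release a no-longer-needed hold, pick off the frozen coordinates one at a time, and then re-catch the active coordinate (easy, since a robber oscillating within a bounded part of a tree is readily caught by a cop with free moves); and (iii) making rigorous the repeated appeal to the fact that a tree chase still succeeds when the chasing cop is intermittently forced to pass. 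Once these are settled the claim, and hence the theorem, follows.
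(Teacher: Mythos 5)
Your lower bound is correct and is a genuinely different (and rather clean) argument from the paper's: the paper has the robber play directly on $G$, using a coordinate count ($2c+d<k$) to find a safe coordinate to move in each turn, whereas you push the whole game forward along the homomorphism $G \to Q_k$ and invoke Theorem~\ref{OO}. That reduction is sound: the homomorphism is surjective, cop moves project to legal non-stalling moves, and the robber can lift each prescribed $Q_k$-move because every $T_j$-neighbour of his $j$th coordinate carries the opposite colour.

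The upper bound, however, has a fatal gap: the claim that two cops can win from \emph{any} starting positions on a product of two or three nontrivial trees is false, and with it the appeal to Theorem~\ref{thm:prod_general} collapses. The obstruction is parity. Such a product $H$ is bipartite; if both cops begin in the same partite set and the robber begins in that set as well (at even distance from both, which he can arrange whenever $H$ is not tiny), then before every cop turn all three players occupy the same partite set, so every cop move lands in the partite set \emph{not} containing the robber. Hence the cops can never capture on a cop move; their only hope is to force the robber to step onto a cop, which requires occupying his entire neighbourhood. With two cops this is impossible once the robber can confine himself to vertices of degree at least $3$ --- which is every vertex of a product of three nontrivial trees, and all but the four corners of, say, $P_n \cart P_n$. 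So the hypothesis of Theorem~\ref{thm:prod_general} (that $\acop(G_2)$ cops win on $G_2$ from any initial configuration) fails for your blocks, and no amount of care in the ``hold and chase'' bookkeeping in (i)--(iii) can rescue it. This is precisely why the paper does not factor the strategy through a block decomposition: it fixes a single global initial placement (the $\ceil{2k/3}$ cops split as evenly as possible between two \emph{adjacent} vertices), which forces $c$ cops to be at even and $d$ at odd distance from the robber with $c+2d \ge k$; each odd cop then covers \emph{two} coordinates at once via Lemma~\ref{lem:tree_odd} (whose hypothesis is exactly odd distance), while each even cop covers one. The parity that makes a cop worth two coordinates is a property of the whole product relative to the robber's initial choice, and it does not survive being chopped into independent blocks.
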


\begin{proof}
The claim is clearly true when $k=1$, so suppose $k \ge 2$.  We first show that $\left\lceil 2k/3\right\rceil$ cops can capture the robber.  Initially, the cops split themselves into groups whose sizes differ by at most 1.  The cops in one group choose some vertex $v$ and all begin the game on $v$; cops in the other group begin on any neighbor of $v$.  Once the robber has chosen his initial position, let $c$ denote the number of cops at even distance from the robber (call them {\em even cops}) and $d$ the number of cops at odd distance ({\em odd cops}).  Note that since $G$ is bipartite, there will be $c$ even cops and $d$ odd cops prior to every cop turn throughout the duration of the game.

We assign to each cop either one or two {\em inactive coordinates}; coordinates which are not inactive are {\em active coordinates}.  To the $c$ even cops we assign coordinates $1, 2, \dots, c$, respectively; to the $d$ odd cops we assign coordinates $c+1 \text{ and } c+2, c+3 \text{ and } c+4, \dots, c+2d-1 \text{ and } c+2d$, respectively.  If necessary, we ``round down'' active coordinates to $k$, so it may be that multiple cops have $k$ as an inactive coordinate.  Through case analysis depending on the congruence class of $k$ modulo 3, it is easily verified that $c+2d \ge k$, so every coordinate is an inactive coordinate for at least one cop.

Each cop moves as follows.  If the cop's and robber's positions disagree in any of the cop's active coordinates, then the cop takes one step closer to the robber in any such coordinate.  Otherwise, the cop restricts her attention to her inactive coordinates and pretends she is playing a game on the Cartesian product of the corresponding trees.  If the cop has only one inactive coordinate, then she simply moves one step closer to the robber in that coordinate.  If instead she has two inactive coordinates, then she follows the strategy outlined in Lemma~\ref{lem:tree_odd}.  Note that every cop with two inactive coordinates is an odd cop, and hence must be at an odd distance from the robber; since the cop under consideration agrees with the robber in all but her two inactive coordinates, she must be at odd distance even when considering only those two coordinates.  Thus, she can indeed follow the winning strategy in Lemma~\ref{lem:tree_odd}.  Once the cop captures the robber in this new game, she has in fact captured the robber in the ``real'' game.

It remains to show that this is a winning strategy for the cops.  Consider a single round of the game, consisting of a robber turn followed by a cop turn.  It is clear from the cops' strategy that each cop's total distance to the robber across all active coordinates cannot increase throughout the course of a full round.  Suppose now that the robber moved in coordinate $i$.  Some cop has $i$ as an inactive coordinate.  If, on the cops' turn, that cop had not yet caught up to the robber in all of her active coordinates, then by the end of the round her total distance to the robber across all active coordinates has decreased by one.  If instead that cop had already caught up to the robber in her active coordinates, then on her turn, she was able to focus on her inactive coordinates and take one step closer to winning in that game.  Thus, on each turn, at least one cop makes progress, either toward catching up to the robber in her active coordinates or toward capturing the robber in her inactive coordinates; moreover, no other cop loses progress toward either of these goals.  Hence, eventually, some cop captures the robber.

We have thus shown that $\acop(G) \le \left\lceil 2k/3\right\rceil$.  To show that $\acop(G) \ge \left\lceil 2k/3\right\rceil$, we give a strategy for the robber to evade $\left\lceil 2k/3\right\rceil - 1$ cops.  Denote the partite sets of $G$ by $X$ and $Y$, and suppose that initially $c$ cops begin in $X$ while $d$ cops begin in $Y$.  Without loss of generality suppose $c \le d$.  

The robber begins the game on any vertex in $Y$ that is neither occupied by nor adjacent to any cop.  An argument similar to that used in the proof of Theorem~\ref{thm:bipartite_blowup} shows that this is always possible.  It suffices to show that, on every robber turn, the robber can move to a vertex that is neither occupied by nor adjacent to any cop.  Since cops at distance three or greater from the robber pose no immediate threat to him, we focus only on cops that are either at distance 1 or distance 2.  When it is the robber's turn, the cops have taken one more turn than the robber, so $c$ cops occupy the same partite set as the robber while $d$ cops occupy the other partite set; consequently, at most $c$ cops are at distance 2, while at most $d$ are at distance 1.  Cops at distance 2 agree with the robber in all but at most two coordinates, while those at distance 1 agree in all but at most one coordinate.  Now note that
$$2c+d = c+(c+d) \le \frac{3}{2}\left (\left\lceil\dfrac{2k}{3}\right\rceil - 1\right ) < k,$$
so there is at least one coordinate, say coordinate $i$, in which all cops at distance 1 or 2 agree with the robber.  The robber now simply takes one step in any direction in $T_i$.  This increases his distance to those cops at distance 1 or 2, while it decreases his distance to all other cops by at most 1.  Thus the robber ends his turn on a vertex that is neither occupied by nor adjacent to any cop, as desired.
\end{proof}

We next seek to determine the fully active cop numbers of products of cycles.  We will need the following lemma.

\begin{lem}\label{lem:same_partite_set}
Let $G$ be a bipartite graph.  Let $k = \cop(G)$, and consider the fully active game on $G$ with $k$ cops.  If, after the initial placement by both players, all cops and the robber occupy the same partite set of $G$, then the cops can ensure capture of the robber.
\end{lem}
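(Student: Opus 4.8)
The plan is to shadow the passive game. Fix a winning strategy $\sigma$ for $k=\cop(G)$ cops in the ordinary (passive) game. The real cops run, in their heads, a shadow passive game on $G$ in which the shadow cops obey $\sigma$ and the shadow robber simply repeats whatever move the real robber has just made; this is a legitimate robber strategy in the passive game (the real robber moves to a neighbour every turn), so $\sigma$ guarantees that eventually either the real robber has already been captured or some shadow cop steps onto the shadow robber's vertex. Each real cop will stay within one step of her shadow counterpart, and bipartiteness will upgrade the shadow capture into a real one.

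The fact that drives everything is a parity observation. In a bipartite graph, since in the fully active game every player switches sides of the bipartition on every turn, each player's side is pinned down by the turn counter. Using the hypothesis on how the cops and the robber sit relative to the partite sets, one checks that just before every cop move the cops occupy the partite set \emph{not} containing the robber; in particular, at that instant every neighbour of the robber's vertex lies in the cops' side of the bipartition. Getting this bookkeeping exactly right — and seeing precisely where the partite-set hypothesis is consumed — is the first thing I would nail down.

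Next I would maintain the invariant that, before each cop move, every real cop sits either on her shadow counterpart's vertex or on a neighbour of it, the second case arising only as a transient consequence of $\sigma$ telling that shadow cop to stay put. The update is the obvious one: when $\sigma$ moves a shadow cop to a neighbour, the real cop goes there if she can, and otherwise first steps back onto the shadow cop's former vertex (which is adjacent to the new one); when $\sigma$ keeps a shadow cop in place, the real cop steps to any neighbour if she was on the shadow vertex, or steps back onto it if she was beside it. A short case check shows the invariant is preserved, so a real cop is never more than one step from her shadow cop. Now, when $\sigma$ has shadow cop $i$ step from a vertex $u$ onto the shadow robber's vertex $w$ (so $u\sim w$), that vertex $w$ is the real robber's current vertex, so $u$ is a neighbour of the real robber and therefore lies in the cops' partite set while the robber's side is the other one; since real cop $i$ is within one step of $u$ and every neighbour of $u$ lies in the robber's side, she cannot be beside $u$ — she is on $u$, she makes the move $u\to w$, and she captures the real robber.

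The crux, I expect, is the handling of $\sigma$'s stay-put instructions: that is exactly where the fully active constraint bites, since a lagging cop cannot in general reproduce $\sigma$'s next step, and in a bipartite graph she cannot "waste a move" to resynchronise either. The point of the whole argument is that the parity observation makes up for this — it forces the trailing cop back onto her target vertex precisely at the capturing move — so the delicate part is verifying that this pinning really does occur, and keeping the move order of the shadow game correctly synchronised with that of the real game is the routine-but-fussy remainder.
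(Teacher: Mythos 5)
Your overall architecture --- a shadow passive game whose robber copies the real robber, each real cop kept within one step of her shadow counterpart, and a parity argument to close the final gap --- is the same as the paper's, but your parity bookkeeping is inverted, and the capture step collapses as a result. Under the paper's convention the cops move first, so just before every cop move the cops and the robber have made the same number of moves; since by hypothesis they all start in the same partite set, just before every cop move they occupy the \emph{same} partite set, not opposite ones as you claim (your claim already fails just before the very first cop move). Rerunning your capture analysis with the correct parity: when the shadow cop steps from $u$ onto the robber's vertex $w$, the vertex $u$, being a neighbour of $w$, lies in the partite set opposite the robber's, while the real cop lies in the robber's partite set --- so the real cop is \emph{not} on $u$ but on some neighbour of $u$. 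The best she can do is move to $u$, ending her turn adjacent to the robber, after which the robber moves and may escape to a different neighbour of $w$. So the one case you analyse is precisely the case in which the parity works against the cops.

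The corrected parity also shows that in the real game a capture can only ever occur on a robber move: before each cop move every cop is at even distance from the robber, so no cop can land on him. Hence the shadow capture that converts directly into a real capture is the one you never address, namely the shadow robber being forced to step onto a shadow cop: there the invariant places the real cop on that shadow cop's vertex or a neighbour of it, and parity (both players now having made equally many moves) rules out the neighbour, so the real cop is already standing on the robber. The paper's proof splits into exactly these two cases --- shadow capture on a robber turn versus on a cop turn --- and it is the robber-turn case that does the work; the cop-turn shadow capture is the genuinely delicate point of the whole lemma, and your argument, resting on the reversed parity, does not supply a mechanism for it.
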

\begin{proof}
Label the cops $C_1, C_2, \ldots, C_k$.  Suppose that the cops and robber have chosen their initial positions for the fully active game on $G$, and suppose further that all cops and the robber occupy the same partite set.  The cops ``imagine'' an instance of the passive game on $G$ and use a winning strategy in that game to guide their play in the fully active game.  Initially, all cops and the robber occupy the same vertices in the imagined passive game as in the fully active game.  (Note that when $\cop(G)$ cops play the passive game on $G$, they can win regardless of which positions they initially occupy, since they can begin the game by gradually moving to whichever positions they might have preferred to start at.)  For positive integers $i, t$ with $1 \le i \le k$, let $v_i^{(t)}$ denote the position of cop $C_i$ in the passive game after $t$ cop turns.  

In the passive game, the cops follow a winning strategy.  They would like to employ the same strategy in the fully active game; however, cops in the passive game may choose to remain in place, while those in the fully active game cannot.  Hence the cops cannot ensure that each cop always occupies the same vertex in both games.  However, the cops can ensure that for all positive integers $t$, after $t$ cop turns, each cop $C_i$ occupies either vertex $v_i^{(t)}$ or one of its neighbors.  This is clearly true after the cops' initial placement (adopting the convention that $v_i^{(0)}$ denotes $C_i$'s initial position), so we need only show how the cops can maintain this invariant from round to round.  The robber will always occupy the same vertex in both games.

The cops play as follows.  Whenever the robber makes a move in the fully active game, the cops imagine that he makes the same move in the passive game, and they respond (in the passive game) as dictated by their winning strategy.  In the fully active game, the cops mimic this response in the following manner.  Suppose the cops have played $t$ turns in the passive game.  By the invariant, each cop $C_i$ occupies $v_i^{(t)}$ in the passive game and either $v_i^{(t-1)}$ or one of its neighbors in the fully active game.  If $C_i$ occupies some neighbor of $v_i^{(t-1)}$ in the fully active game, then he moves to $v_i^{(t-1)}$ itself; since $v_i^{(t-1)}$ and $v_i^{(t)}$ must be adjacent, this maintains the invariant.  If instead $C_i$ occupies $v_i^{(t-1)}$ itself and $v_i^{(t-1)} \not = v_i^{(t)}$, then he moves to $v_i^{(t)}$.  Finally, if $C_i$ occupies $v_i^{(t-1)}$ and $v_i^{(t-1)} = v_i^{(t)}$, then he moves to any neighbor of $v_i^{(t)}$.  In any case, the invariant is maintained.

Eventually, some cop $C_i$ captures the robber in the passive game.  When this happens, suppose $C_i$ and the robber occupy vertex $v$ in the passive game, while $C_i$ occupies vertex $u$ in the fully active game.  By the cops' invariant, either $u = v$ or $u \in N(v)$.  If the passive game capture happens on a robber turn, then regardless of whether $u = v$ or $u \in N(v)$, cop $C_i$ either has already captured the robber in the fully active game or can capture him on the ensuing cop turn.  If instead the passive game capture happens on a cop turn, then the cops and robber have made the same number of moves; since the cops all began on the same partite set as the robber, in the fully active game, the cops and robber must all occupy the same partite set.  In particular, $u$ and $v$ belong to the same partite set, so we cannot have $u \in N(v)$, and must therefore have $u = v$: that is, cop $C_i$ has in fact captured the robber in the fully active game.  In either case, the cops win the fully active game.
\end{proof}

Later in the section, we will apply Lemma~\ref{lem:same_partite_set} to a special type of graph -- a {\em covering graph}.

\begin{defn}
Given graphs $G$ and $H$, a {\em covering map} from $H$ onto $G$ is a mapping from $V(H)$ to $V(G)$ that is surjective and locally isomorphic -- that is, for each vertex $v$ in $H$, the neighborhood of $v$ maps bijectively onto the neighborhood of its image in $G$.  When a covering map from $H$ to $G$ exists, we say that $H$ is a {\em covering graph} of $G$.
\end{defn}

\begin{lem}\label{lem:covering}
Let $G$ and $H$ be graphs, and let $\phi : H \rightarrow G$ be a covering map.  Consider a multiset ${\mathcal C}$ of cop positions and a robber position $R$ in $H$.  If cops who begin the game on ${\mathcal C}$ can capture a robber who begins on $R$ in the game on $H$, then cops who begin on $\phi({\mathcal C})$ can capture a robber who begins on $\phi(R)$ in the game on $G$.
\end{lem}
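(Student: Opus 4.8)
The plan is to use the unique path-lifting structure of a covering map to translate the robber's play on $G$ into a robber's play on $H$, and to translate the cops' winning play on $H$ back down to $G$. First I would record two basic facts about a covering map $\phi\colon H \to G$: since $\phi$ is a local isomorphism, (i) it carries every edge of $H$ to an edge of $G$ (and, the graphs being simple, to a genuine non-loop edge, so the two endpoints of an edge have distinct images); and (ii) it has the unique lifting property, namely for every $\tilde w \in V(H)$ and every neighbor $x$ of $\phi(\tilde w)$ in $G$ there is exactly one neighbor $\tilde x$ of $\tilde w$ in $H$ with $\phi(\tilde x) = x$, and moreover $\tilde x \ne \tilde w$ whenever $x \ne \phi(\tilde w)$ (so a fully active move in $G$ lifts to a fully active move in $H$).

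Next I would set up the cops' strategy on $G$. Fix a winning strategy $\sigma$ for the cops in the game on $H$ starting from $\mathcal C$ against a robber starting at $R$, and label the cops $1,\dots,m$ where $m = \size{\mathcal C}$. The cops in the game on $G$ start from $\phi(\mathcal C)$, against a robber at $\phi(R)$, and maintain an ``imagined'' copy of the $H$-game alongside the real $G$-game, preserving the invariant that after every turn the position of cop $i$ on $G$ is the $\phi$-image of the position of cop $i$ in the imagined $H$-game, and likewise the robber's position on $G$ is the $\phi$-image of the imagined robber's position on $H$. This holds at the start. On each cop turn, the cops compute $\sigma$'s response in the imagined $H$-game and move each cop on $G$ to the $\phi$-image of the new position of the corresponding cop on $H$; fact (i) makes this a legal fully active move and preserves the invariant. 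On each robber turn, after the real robber traverses an edge of $G$, the cops use fact (ii) to lift that move, uniquely, to a fully active move of the imagined robber on $H$; the invariant is again preserved. Crucially, the imagined robber's play is completely determined by the real robber's play, so the imagined $H$-game is a genuine legal play of the fully active game on $H$ in which the cops follow $\sigma$.

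Since $\sigma$ is winning, in the imagined $H$-game some cop $i$ eventually occupies the imagined robber's vertex $\tilde r$; by the invariant, at that same turn cop $i$ on $G$ occupies $\phi(\tilde r)$, which is precisely the robber's position on $G$. Hence the cops capture the robber on $G$, and (tracking the number of turns) they do so in finite time, which is what we want.

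I do not expect any deep obstacle here; the point that needs care is the direction of the translation. A collision $\phi(\tilde c_i) = \phi(\tilde r)$ on $G$ need not arise from a collision $\tilde c_i = \tilde r$ on $H$ --- distinct vertices of $H$ can share an image --- so one cannot simply declare the two games ``isomorphic.'' The argument is arranged to use only the implication ``capture on $H$ implies capture on $G$,'' together with the fact that the imagined robber's moves are forced by the real robber's moves; the converse implication, which is false, is never invoked. A secondary bookkeeping point is the turn order: one must check that at the moment the cops must commit to a move on $G$, the imagined $H$-game has advanced exactly to the point where $\sigma$ prescribes a cop move, which is immediate since both games begin from the given placement and thereafter alternate cop and robber turns in lockstep.
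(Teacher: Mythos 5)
Your proposal is correct and follows essentially the same approach as the paper's proof: maintain an imagined game on $H$ whose positions project under $\phi$ to the real positions on $G$, lift the robber's moves via the local isomorphism, push the cops' winning strategy down via $\phi$, and conclude that a capture on $H$ projects to a capture on $G$. Your extra remarks (that lifted moves are genuine fully active moves, and that only the forward implication is used) are accurate refinements of the same argument rather than a different route.
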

\begin{proof}
When playing the game on $G$, the cops ``imagine'' a game on $H$ and use a winning strategy in that game to guide their play on $G$.  In the game on $H$, the cops initially occupy the multiset ${\mathcal C}$ of positions, while the robber occupies position $R$; in the game on $G$, the cops occupy $\phi({\mathcal C})$, while the robber occupies $\phi(R)$.  The cops maintain the invariant that the position of each entity (that is, every cop and the robber) in the game on $H$ maps, under $\phi$, to the position of that entity in the game on $G$.  This is clearly true at the beginning of the game.  When the robber on $G$ moves from some vertex $u$ to some adjacent vertex $v$, the cops imagine that the robber on $H$ moves to some vertex $v'$ with $\phi(v') = v$; this is possible since, by the invariant, the robber currently occupies some vertex $u'$ with $u = \phi(u')$, and since $\phi$ is isomorphic on $N(u')$.  On the cops' turn in the game on $G$, each cop first moves on $H$ according to some winning strategy for that game, and then moves, in $G$, to the image of his new position; as before, this is possible because $\phi$ is locally isomorphic.  Since the cops play a winning strategy on $H$, eventually some cop on $H$ occupies the same vertex (say $x$) as the robber.  At this point, by the invariant, that cop and the robber both occupy $\phi(x)$ in the game on $G$, so the cops must eventually capture the robber. 
\end{proof}
%
%In what follows, we need the following easy observation:
%\begin{proposition}\label{prop:cycle_cover}
%For $k \ge 3$, the graph $C_{2k}$ is a covering graph of $C_k$; moreover, there is some covering map under which each vertex of $C_k$ has one preimage in each partite set of $C_{2k}$.
%\end{proposition}
%\begin{proof}
%Let $u_1, u_2, \ldots, u_{2k}$ and $v_1, v_2, \ldots, v_k$ denote the vertices of $C_{2k}$ and $C_k$, respectively, in cyclic order.  The map $\phi : \{u_1, u_2, \ldots, u_{2k}\} \rightarrow \{v_1, v_2, \ldots, v_k\}$ given by
%$$\phi(u_i) = \left \{\begin{array}[ll]v_i &\text{if } i \le k;\\v_{i-k} &\text{if }i > k\end{array} \right .$$
%is easily seen to be a covering map with the desired property. 
%\end{proof}

%\begin{proposition}\label{prop:product_cover}
%Fix graphs $G$, $G'$, and $H$.  If $G'$ is a covering graph of $G$, then $G' \cart H$ is a covering graph of $G \cart H$.
%\end{proposition}
%\begin{proof}
%Let $\phi$ be a covering map from $G'$ onto $G$.  The map $\psi : V(G' \cart H) \rightarrow V(G \cart H)$ given by $\psi((u,v)) = (\phi(u),v)$ for all $u \in V(G'), v \in V(H)$ is easily seen to be a covering map.
%\end{proof}

%\textbf{*** TODO: in the theorem below, can we reduce the upper bound by 1 if any of the factors is $C_3$ or $C_4$?  What about the lower bound?  The easy lower bound is $k$.  Can we improve it? ***} \\

We now have the tools we need to analyze the fully active game played on the Cartesian product of cycles.  In the theorem below, we make use of the fact that $\cop(C_{n_1} \cart C_{n_2} \cart \cdots \cart C_{n_k}) = k+1$ for any positive integers $k, n_1, n_2, \ldots, n_k$, a result due to Neufeld and Nowakowski \cite{NN98}.

\begin{thm}\label{thm:prod_cycles_nonbipartite}
Let $G = C_{n_1} \cart C_{n_2} \cart \cdots \cart C_{n_k}$.  If any of the $n_i$ is odd, then $\acop(G) \le k+1$.
\end{thm}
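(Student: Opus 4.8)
The plan is to lift the game to a bipartite covering graph that is again a Cartesian product of $k$ cycles, and then invoke Lemma~\ref{lem:same_partite_set}. Since some $n_i$ is odd, the set $S = \{i : n_i \text{ odd}\}$ is nonempty. Define $m_i = 2n_i$ for $i \in S$ and $m_i = n_i$ for $i \notin S$, and let $H = C_{m_1} \cart C_{m_2} \cart \cdots \cart C_{m_k}$. Every $m_i$ is even, so $H$ is bipartite; write $X$ and $Y$ for its two partite sets. Because $H$ is a Cartesian product of $k$ cycles, $\cop(H) = k+1$ by the quoted formula of Neufeld and Nowakowski. For each $i \in S$, the reduction $\mathbb{Z}_{2n_i} \to \mathbb{Z}_{n_i}$ gives a covering map $C_{2n_i} \to C_{n_i}$; taking the identity on the remaining coordinates, the coordinatewise map $\phi : H \to G$ is a Cartesian product of covering maps and hence itself a covering map. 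Verifying this last point is a routine local-bijectivity check on neighbourhoods, but it is the one slightly fiddly ingredient.

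With $H$ and $\phi$ in hand, here is how the $k+1$ cops would play on $G$. They begin by placing all of themselves on a single fixed vertex $v \in V(G)$. After the robber reveals his starting vertex $r$, the cops select preimages $\tilde v \in \phi^{-1}(v)$ and $\tilde r \in \phi^{-1}(r)$ lying in the \emph{same} partite set of $H$, say both in $X$. This is where the oddness hypothesis is used: fixing any $i_0 \in S$, the two preimages of $r_{i_0}$ in $C_{2n_{i_0}}$ (namely $r_{i_0}$ and $r_{i_0} + n_{i_0}$) differ in parity, so $\phi^{-1}(r)$ meets both $X$ and $Y$, and likewise $\phi^{-1}(v)$; hence $\tilde v$ and $\tilde r$ can indeed both be chosen in $X$. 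Note that it is legitimate to make this choice after $r$ is revealed, since the robber's initial position is visible before the cops move.

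The cops now imagine the fully active game on $H$ with all $k+1$ of them starting at $\tilde v$ and the robber starting at $\tilde r$. After the initial placement all cops and the robber lie in the partite set $X$, and $\cop(H) = k+1$, so Lemma~\ref{lem:same_partite_set} shows the cops can capture the robber on $H$. Lemma~\ref{lem:covering} then transfers this to $G$: cops starting on $\phi(\{\tilde v, \dots, \tilde v\}) = \{v, \dots, v\}$ can capture a robber starting on $\phi(\tilde r) = r$ in the fully active game on $G$. Since $r$ was arbitrary, $k+1$ cops win the fully active game on $G$, which gives $\acop(G) \le k+1$.

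The crux — and the reason this argument does not also settle the all-even case of Theorem~\ref{thm:prod_cycles_bipartite} — is exactly the preimage step: doubling every odd factor produces a bipartite cover whose fibres meet both partite sets, which lets us fulfill the hypothesis of Lemma~\ref{lem:same_partite_set} no matter which partite set the robber's start corresponds to. If all $n_i$ are even, then $G$ is already bipartite and $\phi$ is an isomorphism, so a robber starting in the opposite partite set from the cops cannot be handled this way. (If the convention permits degenerate factors $C_1$ or $C_2$, these are trivial and can be absorbed separately, but I would not dwell on them.)
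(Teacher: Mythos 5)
Your proof is correct and follows essentially the same route as the paper's: lift to a bipartite covering product of cycles, use the doubled odd factor to place the lifted cop and robber positions in a common partite set, and then combine Lemma~\ref{lem:same_partite_set} with Lemma~\ref{lem:covering}. The only cosmetic differences are that the paper doubles every factor rather than just the odd ones, and it adjusts the cops' imagined starting vertex (to $(n_1,0,\dots,0)$ if needed) rather than choosing which preimage of the robber's start to use; both devices exploit the same parity flexibility.
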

\begin{proof}
Suppose without loss of generality that $n_1$ is odd.  Let $H = C_{2n_1} \cart C_{2n_2} \cart \ldots \cart C_{2n_k}$.  We represent the vertices of $G$ by ordered $k$-tuples $(w_1, w_2, \ldots, w_k)$ with $0 \le u_i < n_i$ in the usual way: two vertices are adjacent if and only if they agree in all but one coordinate, where they differ by 1 (modulo the length of the corresponding factor cycle).  Likewise, we represent the vertices of $H$ by ordered $k$-tuples $(x_1, x_2, \ldots, x_k)$ with $0 \le x_i < n_{2i}$.  For $1 \le i \le k$, let $\phi_i$ be the covering map from $C_{2n_i}$ onto $C_{n_i}$ defined by 
$$\phi_i(x) = \left \{\begin{array}{ll}x &\text{if } x \le k;\\x-k &\text{if }x > k\end{array} \right .$$
It is easily verified that the map $\psi : V(H) \rightarrow V(G)$ defined by $\psi((v_1, \ldots, v_k)) = (\phi_1(v_1), \ldots, \phi_k(v_k))$ is a covering map from $H$ onto $G$.

Consider the fully active game on $G$, played with $k+1$ cops.  We show how the cops can use an ``imagined'' game on $H$ to guide them in playing on $G$.  In the game on $G$, the cops all begin at vertex $(0,0,\ldots,0)$.  Suppose the robber begins at $(r_1, r_2, \ldots, r_k)$.  Let $(r_1', r_2', \ldots, r_k')$ be any vertex of $H$ whose image (under $\psi$) is $(r_1, r_2, \ldots, r_k)$; the cops imagine that the robber begins the game on $H$ at vertex $(r_1', r_2', \ldots, r_k')$.  If $(0,0,\ldots,0)$ and $(r_1', r_2', \ldots, r_k')$ belong to the same partite set of $H$, then the cops imagine that they all begin the game on $H$ at $(0,0,\ldots, 0)$; otherwise, the cops imagine that they all begin at $(n_1, 0, 0, \ldots, 0)$.  In either case, in the game on $H$, the cops and robber all occupy the same partite set.  Since $\cop(H) = k+1$, Lemma~\ref{lem:same_partite_set} implies that the cops have a winning strategy on $H$.  Consequently, by Lemma~\ref{lem:covering}, they have a winning strategy on $G$ as well, so $\acop(G) \le k+1$, as claimed.
\end{proof}

Theorem~\ref{thm:prod_cycles_nonbipartite} and Theorem~\ref{bounds} together show that when $C_{n_1} \cart C_{n_2} \cart \dots \cart C_{n_k}$ is non-bipartite, then its fully active cop number is either $k$ or $k+1$.  As it turns out, when the graph is bipartite, the fully active cop number behaves much differently.  

\begin{thm}\label{thm:prod_cycles_bipartite}
Let $G = C_{n_1} \cart C_{n_2} \cart \cdots \cart C_{n_k}$.  If the $n_i$ are all even, then $\ceil{\frac{4k}{3}} \le \acop(G) \le \ceil{\frac{4k+4}{3}}$. 
\end{thm}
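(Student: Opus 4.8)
The plan is to prove the two inequalities by quite different means: a robber-evasion argument for $\acop(G) \ge \ceil{4k/3}$, and a cop strategy for $\acop(G) \le \ceil{(4k+4)/3}$ that grafts the active/inactive-coordinate bookkeeping of Theorem~\ref{thm:cartesian_trees} onto Lemma~\ref{lem:same_partite_set}. Throughout I write the factors as $C_{n_1}, \ldots, C_{n_k}$ with all $n_i$ even.

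\textbf{Lower bound.} I would adapt the robber's strategy from the proofs of Theorems~\ref{thm:bipartite_blowup} and~\ref{thm:cartesian_trees}, adding one observation specific to cycles. Put $m = \ceil{4k/3}-1$ and suppose $m$ cops play. Since $G$ is bipartite, the robber starts in the partite set holding the \emph{larger} number of cops; because the cops move first, this guarantees that on every robber turn at most $p := \floor{m/2}$ cops lie at even distance from him and the other $q := m-p$ lie at odd distance. A vertex in that partite set at distance $\ge 2$ from every cop exists by a degree/size count exactly as in the proof of Theorem~\ref{thm:bipartite_blowup} (every vertex has degree $2k$ and $\size{V(G)} \ge 4^k$, so the estimate is immediate). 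It then suffices to show that on each robber turn he can step to a vertex again at distance $\ge 2$ from every cop. The cycle-specific point is that a cop at distance $1$ or $2$ does not forbid the robber a whole coordinate, only certain (coordinate, direction) pairs: a cop at distance $1$ forbids exactly one pair $(i,\delta)$ (stepping by $\pm 1$ in coordinate $i$), while a cop at distance $2$ forbids at most two. A coordinate is useless to the robber only if \emph{both} of its directions are forbidden, so the number of useless coordinates is at most $\floor{(2p+q)/2} = \floor{(m+p)/2} \le \floor{3m/4} < k$, the last inequality a short check on $k \bmod 3$. Hence some coordinate $i$ has an unforbidden direction; stepping that way in $C_{n_i}$ lands the robber at distance $\ge 2$ from every nearby cop and alters his distance to each far cop by at most $1$, so he stays at distance $\ge 2$ from everyone. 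This works from his first move on, so he evades $m$ cops.

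\textbf{Upper bound.} Here I would use that $\cop(G)=k+1$ (Neufeld--Nowakowski) together with Lemma~\ref{lem:same_partite_set}: as soon as $k+1$ cops and the robber share a partite set of $G$, those cops win. The strategy is to split the $k$ coordinates into teams, each responsible for a bounded number of coordinates, so that each team first synchronizes with the robber coordinate by coordinate within its block and then runs an imagined passive game on that sub-product guided by Lemma~\ref{lem:same_partite_set} (falling back on Lemma~\ref{lem:tree_odd}-style play once only two coordinates of the block remain). Because cornering a robber on a single cycle already takes two cops---a lone cop can never do it on $C_{2m}$---a block costs roughly ``two cops per cycle coordinate'', but three cycle coordinates can be shared among four cops by the odd-distance trick, which is where the $4/3$ ratio comes from; the slightly larger bound $(4k+4)/3$ absorbs the cost of the parity fix below. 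As in Theorem~\ref{thm:cartesian_trees}, teams need not act independently: when the robber moves inside another team's block, an idle team simply drifts (following him there, or bouncing inside its own block), which freezes its imagined game; since the robber can move inside a fixed block only finitely often before being forced to turn, and cannot avoid every block forever, each team makes progress whenever the robber visits its block, so someone eventually captures him.

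\textbf{Main obstacle.} The delicate part is \emph{parity synchronization}. Lemma~\ref{lem:same_partite_set} requires a team to begin in the same partite set (of its block) as the robber, yet the robber chooses his position after the cops, and in the fully active game a cop's parity relative to the robber is frozen once play begins; worse, the robber can de-synchronize all the blocks at once. Resolving this---so that one does not have to duplicate every team, which would blow the count up to $\approx 2k$---should exploit the cops' free first move and the global product structure so that a single small correction handles all the block-parities simultaneously, and this correction is presumably what separates $\ceil{4k/3}$ from $\ceil{(4k+4)/3}$. Pinning that correction down and checking the active/inactive bookkeeping survives it while every cop still moves on every turn is where the real work lies; by contrast the lower bound is essentially a sharpened counting argument.
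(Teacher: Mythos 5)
Your lower bound is correct and, in fact, supplies details that the paper omits (it simply says the bound ``follows from an argument very similar to'' that of Theorem~\ref{thm:cartesian_trees}). The key refinement you identify --- that on cycles a nearby cop forbids (coordinate, direction) \emph{pairs} rather than whole coordinates, so that at most $2p+q=m+p\le m+\floor{m/2}$ pairs are forbidden and a coordinate is lost only when both of its directions are, giving at most $\floor{3m/4}<k$ useless coordinates --- is exactly what is needed to upgrade the $\ceil{2k/3}$ count from the tree case to $\ceil{4k/3}$, and your arithmetic checks out in all three congruence classes of $k$.

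The upper bound, however, has a genuine gap, and you have named it yourself: you never resolve the parity-synchronization problem, you only conjecture that ``a single small correction'' exists. Moreover, the mechanism you sketch for the $4/3$ ratio --- ``three cycle coordinates can be shared among four cops by the odd-distance trick'' --- does not work: Lemma~\ref{lem:tree_odd} is genuinely a statement about trees. Its proof needs the robber to exhaust his ability to retreat (he can flee for at most $\diam(T_1)+\diam(T_2)$ rounds), and on a product of even cycles a robber at odd distance from a lone cop can run forever, so the cop never closes the gap. The paper's resolution is different and is worth contrasting with your plan: it starts half the cops on $(0,\ldots,0)$ and half on the adjacent vertex $(1,0,\ldots,0)$, so that whichever partite set the robber chooses, roughly half the cops --- enough to realize $\cop$ of the product of about $2k/3$ of the factor cycles, namely $2k/3+1$ --- share his parity and can run the Lemma~\ref{lem:same_partite_set} simulation on that sub-product; the remaining, wrong-parity cops are not ``corrected'' at all but are instead assigned in pairs to the other $k/3$ coordinates, one pair per single cycle, a task for which parity is irrelevant since two cops win the fully active game on any cycle. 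The count $(2k/3+1)+2\cdot(k/3)\approx 4k/3+1$ is where $\ceil{(4k+4)/3}$ comes from. So the missing idea is not a parity fix applied to every team, but an asymmetric division of labor in which the wrong-parity cops are given a parity-free job; without that, your block decomposition does not yield the claimed bound.
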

\begin{proof}
The lower bound follows from an argument very similar to that used in the proof of Theorem \ref{thm:cartesian_trees}; we omit the details.

For the upper bound, suppose each $n_i$ is even and let $m = \ceil{\frac{4k+4}{3}}$.  We give a strategy for $m$ cops to capture the robber on $G$.  We represent the vertices of $G$ as ordered $k$-tuples in the usual way.  Each player's move consists of either incrementing or decrementing one coordinate of his current position; when a player increments (resp. decrements) his $i$th coordinate, we call this {\em moving forward} (resp. {\em moving backward}) in dimension $i$.  We consider several cases, depending on the congruence class of $k$ modulo 3.  

{\bf Case 1:} $k = 3\ell$ for some integer $\ell$.  In this case, $m = 4\ell+2$.  Out of the $4\ell+2$ cops, $2\ell+1$ begin the game on vertex $(0,0, \ldots, 0)$, while the other $2\ell+1$ begin on $(1, 0, \ldots, 0)$.  No matter where the robber starts, exactly $2\ell+1$ cops begin in the same partite set as the robber, while $2\ell+1$ begin in the other partite set.  Let $S$ be the set of cops that begin in the same partite set as the robber, and partition $2\ell$ of the remaining cops into sets $T_1, T_2, \ldots, T_{\ell}$, each of size 2.  (The final remaining cop is not needed and may move arbitrarily.)  

The cops now employ the following strategy.  The cops in $S$ initially aim to make their positions agree with the robber's in dimensions $2\ell+1, 2\ell+2, \ldots, 3\ell$: they do this greedily, always moving closer to the robber in one of these dimensions.  After the cops have achieved this goal, they employ a different strategy.  Whenever the robber moves in dimensions $2\ell+1, 2\ell+2, \ldots, 3\ell$, the cops in $S$ mirror this action, moving in the same dimension and in the same direction.  When the robber instead moves in one of the first $2\ell$ dimensions, the cops in $S$ take one more step in a winning strategy in the projection of the game onto the first $2\ell$ dimensions.  (The existence of such a strategy is guaranteed by Lemma~\ref{lem:same_partite_set}.)  The cops in each $T_i$ employ a similar strategy.  First, they greedily attempt to catch up to the robber in all dimensions other than $2\ell+i$; once they have done so, whenever the robber moves in dimension $2\ell+i$ the cops in $T_i$ take one more step in a winning strategy in the projection of the game onto dimension $2\ell+i$, and whenever the robber moves in any other dimension the cops move in the same dimension and the same direction.  

Within the first $n_{2\ell+1} + n_{2\ell+2} + \ldots + n_{3\ell}$ times the robber moves in one of the first $2\ell$ dimensions, the cops in $S$ catch up to him in dimensions $2\ell+1, 2\ell+2, \ldots, 3\ell$; after finitely many more robber moves in one of the first $2\ell$ dimensions, some cop in $S$ captures the robber.  Thus, the robber can move in one of the first $2\ell$ dimensions only finitely many times without being captured.  Likewise, for $1 \le i \le \ell$, the robber can move only finitely many times in dimension $2\ell+i$ before he is captured by some cop in $T_i$.  In other words, whenever the robber moves in dimensions $1, 2, \ldots, 2\ell$, the cops in $S$ get closer to capturing him, and whenever the robber moves in dimension $2\ell+i$, the cops in $T_i$ get closer to capturing him. Eventually, some cop must capture the robber.

{\bf Case 2:} $k = 3\ell+1$ for some integer $\ell$.  We now have $m = 4\ell+3$.  We proceed similarly to the previous case, starting $2\ell+1$ cops on $(0,0,\ldots, 0)$ and the other $2\ell+2$ on $(1,0,\ldots, 0)$.  At least $2\ell+1$ cops must begin in the same partite set as the robber; let these cops comprise the set $S$, and partition the remaining $2\ell+2$ cops into pairs $T_1, T_2, \ldots, T_{\ell+1}$.  As in Case 1, the cops in $S$ first catch up to the robber in dimensions $2\ell+1, 2\ell+2, \ldots, 3\ell+1$, then attempt to employ a winning strategy in the projection of the game onto the first $2\ell$ dimensions; the cops in $T_i$ first catch up to the robber in all dimensions other than $T_{2\ell+i}$, then attempt to employ a winning strategy in the projection of the game onto dimension $2\ell+i$.  As before, some cop eventually captures the robber.

{\bf Case 3:} $k = 3\ell+2$ for some integer $\ell$.  This time, $m = 4\ell+4$.  Now $2\ell+2$ cops begin on $(0,0,\ldots, 0)$, while the other $2\ell+2$ begin on $(1,0,\ldots, 0)$.  Exactly $2\ell+2$ cops must begin in the same partite set as the robber; let these cops comprise the set $S$, and partition the remaining $2\ell+2$ cops into pairs $T_1, T_2, \ldots, T_{\ell+1}$.  As in the previous cases, the cops in $S$ first catch up to the robber in dimensions $2\ell+2, 2\ell+3, \ldots, 3\ell+2$, then attempt to employ a winning strategy in the projection of the game onto the first $2\ell+1$ dimensions; the cops in $T_i$ first catch up to the robber in all dimensions other than $T_{2\ell+1+i}$, then attempt to employ a winning strategy in the projection of the game onto dimension $2\ell+1+i$.  Once again, some cop eventually captures the robber.

In any case, $m$ cops suffice to capture a robber on $G$.
\end{proof}

\section{Open problems}

Several natural questions on the fully active game remain open.  

\begin{ques}
What is the smallest constant $c$ such that $\acop(G) \leq c$ for every planar graph $G$?
\end{ques}

Trivially, $\acop(G) \leq 6$ for every planar graph $G$, following from Aigner and Fromme's proof \cite{AF84} that planar graphs have cop number at most $3$ and from Theorem \ref{bounds}.  %However, since their proof relies on retracts, no easy modification of their argument seems possible to achieve an improved result for active cops and robbers.  Thus, any progress on Question \cite{planar} will most likely require a new analysis of planar graphs.

\begin{ques}
Is $\acop(G) \leq \cop(G)$ for every non-bipartite graph $G$?
\end{ques}

\begin{ques}
Is $\acop(G) \geq \cop(G)$ for every bipartite graph $G$?
\end{ques}

The only examples of graphs we know that satisfy $\acop(G) < \cop(G)$ are non-bipartite, and the only ones we know that satisfy $\acop(G) > \cop(G)$ are bipartite.  In light of Theorems \ref{thm:prod_general}, \ref{thm:prod_cycles_nonbipartite}, and \ref{thm:prod_cycles_bipartite}, it seems that the game works very differently on bipartite and non-bipartite graphs, so it is not unreasonable to suspect that $\acop(G) > \cop(G)$ requires $G$ to be bipartite.

\begin{ques}
Let $c$ and $k$ be positive integers, with $k<c$.  Does there necessarily exist a graph $G$ with $\cop(G)=c$ and $\acop(G)=c+k$?
\end{ques}

As demonstrated by Cartesian products of even cycles (Theorem~\ref{thm:prod_cycles_bipartite}) and blowups of Cartesian products of trees (Theorem~\ref{thm:bipartite_blowup}), we know that the active cop number or a graph can be roughly $4/3$ or $2$ times the usual cop number, but we have no construction that forces the parameters to differ by a prescribed additive constant.

\section{Acknowledgements}

This research was conducted, in part, while the first author was affiliated with Dawson College.  The first and third authors received support from the Fonds de recherche du Qu\'ebec - Nature et technologies.

\section*{\refname}

\bibliographystyle{siam}
\bibliography{references}

\end{document}